\let\csname equation*\endcsname\relax
\let\csname endequation*\endcsname\relax
\newtheorem{theorem}{Theorem}[section]
\newtheorem{proposition}[theorem]{Proposition}
\begin{document}


\title{Convex Augmentation for Total Variation Based Phase Retrieval}


\author{Jianwei Niu$^1$, Hok Shing Wong$^1$, and Tieyong Zeng$^1$}

\address{$^1$ Department of Mathematics, the Chinese University of Hong Kong\\
}
\eads{\mailto{jwniu@math.cuhk.edu.hk}, \mailto{hswong@math.cuhk.edu.hk} and \mailto{zeng@math.cuhk.edu.hk;}}
\vspace{10pt}
\begin{abstract}
Phase retrieval is an important problem with significant physical and industrial applications. In this paper, we consider the case where the magnitude of the measurement of an underlying signal is corrupted by Gaussian noise. We introduce a convex augmentation approach for phase retrieval based on total variation regularization. In contrast to popular convex relaxation models like PhaseLift, our model can be efficiently solved by a modified semi-proximal alternating direction method of multipliers (sPADMM). The modified sPADMM is more general and flexible than the standard one, and its convergence is also established in this paper. Extensive numerical experiments are conducted to showcase the effectiveness of the proposed method. 
\end{abstract}

\vspace{2pc}
\noindent{\it Keywords}: phase retrieval, Gaussian noise, total variation, convex model, sPADMM

\section{Introduction}
\label{sec:intro}
It is often the case that only the magnitude of the Fourier transform of an underlying signal can be measured in real-world measurement systems. Since the phase information is missing, the procedure of reconstructing an underlying signal from its Fourier transform magnitude is usually referred to as phase retrieval (PR). Phase retrieval emerges in vast physical and industrial applications, such as optics \cite{shechtman2015phase,walther1963question}, astronomical imaging \cite{fienup1987phase,luke2002optical}, X-ray crystallography \cite{harrison1993phase,millane1990phase} and microscopy \cite{miao2008extending,shechtman2015phase}.

With the Fourier transform replaced by another arbitrarily chosen linear operator, the generalized phase retrieval problem can be formulated as \cite{kishore2015phase,waldspurger2015phase},
\begin{equation}
    \begin{aligned}
    &\text{Find } u\in\mathbb{C}^n\\
    &\text{subject to }|Au|=b,
    \end{aligned}
\label{pr-problem}
\end{equation}
where $A:\mathbb{C}^n\rightarrow\mathbb{C}^m$ is a linear operator and $b\in\mathbb{R}^m_+$.

In this paper, we focus on phase retrieval problems in a two dimensional discrete setting. We denote the $(j,k)$-entry of a 2D object $U$ by $U_{j,k}$. To make things simple, a $n_1 \times n_2$ two dimensional object $U$ can be stacked into a column vector $u$ of length $n$ in a lexicographical order, where $n=n_1*n_2$. The $j^{th}$-entry of $u$ is then denoted by $u_j$. For the lexicographical ordered signal $u$, we define the corresponding discrete Fourier transform (DFT) operator $\mathcal{F}: \mathbb{C}^{n} \rightarrow \mathbb{C}^{n}$ as
 \begin{equation}
     \mathcal{F}(u)=\left(F_{n_1\times n_1} \otimes F_{n_2\times n_2}\right) u =\operatorname{vec} \left(F_{n_1\times n_1} U F_{n_2\times n_2}\right),
     \label{Fourier}
 \end{equation}
where the symbol $\otimes$ denotes Kronecker products, $F$ is the corresponding DFT matrix and $\operatorname{vec}(\cdot)$ restack a given 2D signal into a column vector. We consider the case where the linear operator $A$ in (\ref{pr-problem}) is generated by masked Fourier measurements \cite{chen2018phase} with masks $\{D_{j}\}_{j=1}^J$ based on coded diffraction (CDP) \cite{candes2015phaseCDP}.
Explicitly, we have
\begin{equation}
A u=\left[\begin{array}{c}
\mathcal{F}\left(D_{1}  u\right) \\
\mathcal{F}\left(D_{2}  u\right) \\
\vdots \\
\mathcal{F}\left(D_{J}  u\right)
\end{array}\right],
\label{CDP}
\end{equation}
where $D_j\in\mathbb{C}^{n\times n}$ are diagonal matrices. Therefore, in all the above scenarios, $A^*A$ is also a diagonal matrix. 

Phase retrieval is extremely challenging due to its non-convexity and non-uniqueness of solution \cite{hofstetter1964construction}. In fact, it is pointed out in \cite{oppenheim1981importance} that phase information is usually more important than the magnitude information for recovering the signal from its Fourier transform. There have been plenty of works on the study of the uniqueness of a solution. There are some trivial ambiguities (non-uniqueness) such as global phase shift, conjugate inverse, and spatial shift \cite{shechtman2015phase}. For real signals of size $N$, unique recovery result with $2N-1$ random measurements is presented in \cite{balan2006signal}. As for complex signals, \cite{conca2015algebraic,fickus2014phase} extend the result by requiring $4N-4$ measurements. Unique recovery is also studied on minimum-phase signals \cite{huang2016phase} and sparse signals with non-periodic support \cite{jaganathan2013sparse}. Wong \textit{et al.} \cite{wong2020one} focused on binary signals and described a new type of ambiguities. Furthermore, Cheng \textit{et al.} \cite{cheng2021stable} considered the stability of PR problem in infinite-dimensional spaces. 

In the case of 2D images, Hayes \cite{hayes1982reconstruction} showed that $4n$ measurements are required for exact recovery of real-valued images. Cand\`{e}s \textit{et al.} \cite{candes2015phasemc} showed exact recovery result from $3n$ Fourier measurements, with the linear operator $A$ taking the form
\begin{equation}
A u=\left[\begin{array}{c}
\mathcal{F} u \\
\mathcal{F}\left(u+\mathcal{D}^{s_{1}, s_{2}} u\right) \\
\mathcal{F}\left(u-\mathbf{i} \mathcal{D}^{s_{1}, s_{2}} u\right)
\end{array}\right],
    \label{holo pattern operator}
\end{equation}
where

\begin{align}
&\left(\mathcal{D}^{s_{1}, s_{2}} u\right)_{j+(k-1)n_1}=\exp \left(\frac{2 \pi \mathbf{i} s_{1} (j-1)}{n_{1}}+\frac{2 \pi \mathbf{i} s_{2} (k-1)}{n_{2}}\right) u_{j+(k-1)n_1},
\end{align}
with $1\leq j\leq n_1$, $1\leq k\leq n_2$ and integers $s_{1}, s_{2}$ coprime to $n_{1}, n_{2},$ respectively. However, $3n$ measurements are not enough for stable exact recovery empirically and in fact $7n$ measurements are suggested. Chang \textit{et al.} \cite{chang2016phase} improved the result and proved that $3n$ measurements are sufficient both theoretically and practically when $s_1,s_2$ are both equal to $1/2$. 

The algorithms for solving the phase retrieval problem can be classified into three main categories. The first category is the 'Greedy methods', which are based on alternating projection. The pioneering work error reduction (ER) by Gerchberg and Saxton \cite{gerchberg1972practical} imposes time domain and magnitude constraints iteratively by pairs of projections. Fineup proposed variants of the ER method \cite{fienup1978reconstruction,fienup1982phase}, among which the hybrid input-output (HIO) \cite{fienup1982phase} is widely used due to its efficiency. Some other projection-based methods such as hybrid projection-reflection methods \cite{bauschke2002phase,bauschke2003hybrid}, iterated difference map \cite{elser2003phase} and relaxed averaged alternation reflection \cite{luke2004relaxed} are proposed. Moreover, saddle-point optimization is introduced in \cite{marchesini2007phase} to solve the phase retrieval problem. \cite{wen2012alternating} also proposed a similar method. However, these methods lack convergence analysis due to the projections to non-convex sets.

Another type of algorithms for random measurements based on gradient descent methods has become popular recently. Cand\`{e}s \textit{et al.} proposed the Wirtinger flow (WF) method \cite{candes2015phase}, which is a non-convex method comprising of a spectral initialization step and a gradient descent step. Unlike alternating projection methods, the convergence of WF can be guaranteed. The Wirtinger flow approach is further improved by the work truncated Wirtinger flow (TWF) \cite{chen2017solving}. Incremental methods are also proposed to solve (\ref{pr-problem}), for example the Kaczmarz methods \cite{li2015phase,wei2015solving}. Incremental versions of TWF is introduced in \cite{zhang2016provable}. To reduce complexity, reshaped Wirtinger flow (RWF) and its incremental version IRWF with a lower-order loss function are presented in \cite{zhang2016reshaped}. 

The final category is the convex relaxation method. Since phase retrieval constraints are quadratic, semidefinite programming (SDP) techniques can be applied to solve such problems \cite{goemans1995improved,so2007approximating}. In such approaches, the variable is often 'lifted' into a higher-dimensional space, and the phase retrieval problem is converted to a tractable SDP. For example, PhaseLift \cite{candes2013phaselift} reformulates the phase retrieval problem as a rank-1 minimization, which is subsequently relaxed to a convex nuclear norm minimization problem. Waldspurger \textit{et al.} proposed another convex method PhaseCut \cite{waldspurger2015phase}, which separates the phase and magnitude through complex semidefinite programming. However, due to a large number of variables, the convex relaxation methods are computationally demanding and impractical.

In this paper, we consider phase retrieval problems with the magnitude of measurements corrupted by Gaussian noise. That is, the measurement $g$ satisfies 
\begin{equation}
    g = |Au|+\xi,
    \label{noisy model}
\end{equation}
where $\xi$ is an additive Gaussian white noise. Since non-convex models can get stuck at a local minimum and the convergence result can usually be shown on subsequences only, we look for a convex augmentation model in this paper. Furthermore, total variation (TV) regularization \cite{cai2012image,chambolle2004algorithm,rudin1992nonlinear} has been widely used in different tasks of image processing. It is also shown in \cite{lustig2007sparse,lustig2008compressed} that TV regularization efficiently recovers signals from incomplete information. On the other hand, TV regularization can also be used to deal with phase retrieval problems with noisy measurements. For example, TV regularization was introduced to variational models to suppress noise in \cite{chang2018variational,zhang2005total}. Chang \textit{et al.} \cite{chang2018total} also proposed a TV-based model for phase retrieval with Poisson noise in measurements. Motivated by this, we also consider a TV regularized model in this paper.

Our contribution is twofold. First, by approximating the data term with a convex augmentation and restricting the domain, we proposed a convex augmentation model to deal with magnitude measurements with additive Gaussian noise. Based on this convex augmentation technique, it is possible to further improve the phase retrieval model by replacing the TV regularizer with other regularizers. Second, based on the alternating direction method of multipliers (ADMM) \cite{boyd2011distributed,chan2013constrained,glowinski1989augmented,wu2010augmented} which is often used to solve TV-regularized minimization, we solve the proposed model by a modified semi-proximal ADMM (sPADMM) \cite{fazel2013hankel,li2015convergent}. Due to the multiple linear constraints in the proposed model, the standard sPADMM does not match exactly with the proposed model. Therefore, we modify the augmented Lagrangian and design a more flexible sPADMM that suits well with the model. The existence of the solution and convergence result of the modified sPADMM will also be established. The modified sPADMM presented here can also be useful for a wide range of problems with multiple linear constraints. Unlike convex models like PhaseLift, the proposed model can be efficiently solved by the modified sPADMM. Extensive numerical results also demonstrate the outstanding performance of the proposed method.

The paper is organized as follows. We first recall some notations in \Cref{sec:notations}. In \Cref{sec:Proposed model}, a convex augmentation total variation-based model for phase retrieval problems with Gaussian noise is introduced. The convexity of the proposed method and the existence of the solution are presented. The sPADMM-based algorithm for solving the proposed model is introduced in \Cref{sec:alg}. The convergence of the algorithm will also be presented. Extensive numerical experiments are conducted in \Cref{sec:experiments} to demonstrate the proposed method's effectiveness. Conclusions and future works are in \Cref{sec:conclusions}.

\section{Notations}
\label{sec:notations}

For the $K$ dimensional complex Hilbert space $\mathbb{C}^K$ and $\mu, \nu \in \mathbb{C}^K$, the inner product is defined by $\langle \mu, \nu\rangle= \sum_{j=1}^{K} \mu_{j} \bar{\nu}_{j}$, where  $\bar{(\cdot)}$ means the conjugate of a complex number, vector or matrix. For matrices, we also use ${(\cdot)}^T$ and ${(\cdot)}^{*}$ to denote their transpose and Hermitian transpose, respectively. Given any Hermitian positive definite matrix $S \in \mathbb{C}^{K\times K}$, we denote $\langle \mu,S\nu\rangle$ by $\langle \mu, \nu\rangle_{S}$. In addition, we denote $\|\cdot\|=\sqrt{\langle \cdot , \cdot \rangle}$ as the $l^{2}$-norm and $\|\cdot\|_S=\sqrt{\langle \cdot , \cdot \rangle_S}$ as the induced $S$ norm. The $l^1$-norm w.r.t. $\mu$ is defined by $\|\mu\|_1=\sum_{j=1}^{K} |\mu_{j}|$.  For the sake of brevity, we may use $\operatorname{Re}(\cdot)$ or $(\cdot)_{re} $ to represent the real part of a number, vector or matrix, and $\operatorname{Im}(\cdot)$ or $(\cdot)_{im} $ to represent the imaginary part of a number, vector or matrix.

Suppose $u$ is the lexicographically restacked column vector representation of an image $U$. Following the notations in \Cref{sec:intro}, we define the difference operators $\nabla_x$ and $\nabla_y$ for $U$ as
\begin{equation*}
\begin{array}{ll}
(\nabla_x U)_{j,k} & =\left\{\begin{array}{ll}
U_{j+1, k}-U_{j, k} & \text { if } 1 \leq j\leq n_1-1, \\
0 & \text { if } j=n_1,
\end{array}\right. \\
(\nabla_y U)_{j,k} & =\left\{\begin{array}{ll}
U_{j, k+1}-U_{j, k} & \text { if } 1 \leq k\leq n_2-1, \\
0 & \text { if } k=n_2.
\end{array}\right.
\end{array}
\end{equation*} 

 We denote the discrete gradient of $u$ as
\begin{equation*}
    \nabla u = \left[\begin{array}{l}
\nabla_{x} u \\
\nabla_{y} u
\end{array}\right],
\end{equation*}
where $\nabla_x u=\operatorname{vec}(\nabla_x U)$ and $\nabla_y u=\operatorname{vec}(\nabla_y U)$. We further define the anisotropic TV operator \cite{shi2013efficient} as 
\begin{equation}
   \operatorname{TV}(u)=\|\nabla u\|_{1}=\sum_{j=1}^{n} \left(\left|(\nabla_x u)_j\right|+\left|(\nabla_y u)_j\right|\right).
\end{equation}

Throughout this paper, we will no longer distinguish linear operators and their corresponding matrix representations. Moreover, the divergence operator and the Laplacian are denoted by $\operatorname{div}=-\nabla^*$ and $\Delta=\operatorname{div} \cdot \nabla=-\nabla^*\nabla$, respectively. 
We define the index set $\Omega$ of masked Fourier measurements to be 
$$
\Omega=\{ 1,2, \ldots, Jn  \}, 
$$
where $J$ denotes the number of masks, and we let $\Gamma\subseteq\Omega$ be the under-sampling set.

\section{Proposed model}                                                   
\label{sec:Proposed model}
A simple total variation based model for phase retrieval can be given by
\begin{equation}
\min_{u \in C^{n}} \quad E_{0}(u)=\lambda \operatorname{TV}(u)+\sum_{j \in \Gamma} \left(|A u|_j-g_j \right) ^{2},
\label{TV model}
\end{equation}
where $g$ is the measured data and $\lambda>0$ is a fixed parameter.

To further improve the above model, we first perform a warm-up step. An initial point $\hat{u}$ is computed by running a few iterations of a simple yet efficient PR algorithm like error reduction (ER). We denote
\begin{equation}
    \hat{u}=\operatorname{Init}(A,g)
    \label{initial}
\end{equation}
to be the initial point. We also make $\hat{z}=A\hat{u}$ for future use.

By taking account of the initial point, we consider the following model
\begin{align}
\min_{u \in C^{n}} \quad E_{1}(u)&=\lambda \operatorname{TV}(u)+\sum_{j \in \Gamma} \left(|A u|_j-g_j \right) ^{2}\nonumber +\eta\sum_{j\in\Omega}|(Au)_j-(A\hat{u})_j|^2,
\label{TV-ini model}
\end{align}
where $\eta>0$ is another parameter.

However, the above model is not convex. Since non-convex models can often be stuck at a local minimum, we consider convex augmentation of the above model in what follows.

Since the TV term is convex, we focus on \begin{equation}
    \widetilde{G}_1(z)=\sum_{j\in \Gamma}(g_j-|z_j|)^2+\eta\sum_{j\in\Omega}|z_j-\hat{z}_j|^2,
\end{equation}
where $z=Au$.  

To improve the smoothness and convexity of $\widetilde{G}_1$, we introduce the following approximation of $\widetilde{G}_1$
\begin{equation}
    \widetilde{G}_2(z) = \sum_{j\in \Gamma}(g_j-\sqrt{|z_j|^2+\delta_j})^2+\eta\sum_{j\in\Omega}|z_j-\hat{z}_j|^2,
\end{equation}
where $\delta_j>0$ is a small parameter.

Unfortunately, the above function is still not convex everywhere. In order to get a convex augmentation model, we need to locate the region where $\widetilde{G}_2$ is convex. 

Since $\widetilde{G}_2$ is separable with respect to each component of $z$, we then focus on the function

\begin{equation}
G_j(z_j)=\begin{cases}
\begin{aligned}& \left(g_j-\sqrt{|z_j|^{2}+\delta_j}\right)^{2}  +\eta \left|z_j-\hat{z}_j\right|^{2}\end{aligned}, & \text { if } j \in \Gamma \\
\eta \left|z_j-\hat{z}_j\right|^{2}, & \text { if } j \in \Omega \setminus \Gamma
\end{cases}.
\label{fn:Gj}
\end{equation}

If $j\in\Omega\setminus \Gamma$, then $G_j$ is clearly convex. By writing $z=z_{re}+z_{im}\mathbf{i}$ and $\hat{z}=\hat{z}_{re}+\hat{z}_{im}\mathbf{i}$ and dropping the index for the moment, we consider the function
\begin{align}
    G(z_{re},z_{im})& = (g-\sqrt{z_{re}^2+z_{im}^2+\delta})^2\nonumber +\eta(z_{re}-\hat{z}_{re})^2+\eta(z_{im}-\hat{z}_{im})^2.
\end{align}

The first order partial derivatives of $G(z_{re},z_{im})$ are
\begin{equation}
\begin{aligned}
\frac{\partial G}{\partial z_{re}}&=\frac{-2 g z_{re}}{\sqrt{z_{re}^{2}+z_{im}^{2}+\delta}}+\left(2+2 \eta \right) z_{re}-2 \eta \hat{z}_{re}, \\
\frac{\partial G}{\partial z_{im}}&=\frac{-2 g z_{im}}{\sqrt{z_{re}^{2}+z_{im}^{2}+\delta}}+\left(2+2 \eta \right) z_{im}-2 \eta \hat{z}_{im}. 
\end{aligned}
\end{equation}

The corresponding Hessian matrix is
\begin{equation}
\begin{aligned}
\nabla^2 G &=\left[\begin{array}{cc}
\displaystyle\frac{\partial^{2} G}{\partial z_{re}^{2}} & \displaystyle\frac{\partial^{2} G}{\partial z_{re} \partial z_{im}}  \\
\displaystyle\frac{\partial^{2} G}{\partial z_{im} \partial z_{re}} & \displaystyle\frac{\partial^{2} G}{\partial z_{im}^{2}} 
\end{array}\right]\\
&=\left[\begin{array}{cc}
\displaystyle\frac{-2g\left(z_{im}^{2}+\delta\right)}{\left(r+\delta\right)^{\frac{3}{2}}} & \displaystyle\frac{2 g z_{re} z_{im}} {\left(r+\delta\right)^{\frac{3}{2}}} \\
\displaystyle\frac{ 2 g z_{re} z_{im}}{\left(r+\delta\right)^{\frac{3}{2}}} & \displaystyle\frac{-2 g\left(z_{re}^{2}+\delta\right)}{\left(r+\delta\right)^{\frac{3}{2}}}
\end{array}\right]+(2+2 \eta)\mathbf{I},
\end{aligned}  
\label{Hessian}
\end{equation}
where $r=z_{re}^2+z_{im}^2$ and $\mathbf{I}$ denotes the identity matrix.

\begin{proposition}
For any $\delta>0$, the Hessian matrix $\nabla^2 G$ is positive definite when 
\begin{equation}
(z_{re},z_{im}) \in \begin{cases}
\left\{(z_{re},z_{im})\ | \ z_{re}^2+z_{im}^2> \frac{4}{3}\frac{g^2}{(1+\eta)^2} \right\}, &g>0\\
\mathbb{R}^2, &g \leq 0
\end{cases}.
\label{zregion}
\end{equation}
\label{prop:p1}
\end{proposition}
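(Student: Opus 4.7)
My plan is to reduce positive definiteness of $\nabla^2 G$ to the two eigenvalues of an explicit $2\times 2$ matrix. Writing the Hessian \eqref{Hessian} as
\[
\nabla^2 G = (2+2\eta)\,\mathbf{I} + \frac{2g}{(r+\delta)^{3/2}}\, N,\qquad
N = \begin{pmatrix} -(z_{im}^2+\delta) & z_{re}z_{im} \\ z_{re}z_{im} & -(z_{re}^2+\delta) \end{pmatrix},
\]
the key observation I would exploit is that $N$ has a rank-one structure. Indeed, if one sets $w = (z_{im},\,-z_{re})^{\mathsf T}$, then $N = -\bigl(w w^{\mathsf T} + \delta \mathbf{I}\bigr)$. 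Since $w w^{\mathsf T}$ has eigenvalues $\|w\|^{2}=r$ and $0$, the matrix $N$ has eigenvalues $-(r+\delta)$ and $-\delta$, and therefore $\nabla^2 G$ has eigenvalues
\[
\lambda_{+} \;=\; (2+2\eta) - \frac{2g}{(r+\delta)^{1/2}},\qquad
\lambda_{-} \;=\; (2+2\eta) - \frac{2g\,\delta}{(r+\delta)^{3/2}}.
\]

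The case $g \leq 0$ is then immediate: the coefficient $\tfrac{2g}{(r+\delta)^{3/2}}$ is non-positive while $N$ is negative semidefinite, so the perturbation $\tfrac{2g}{(r+\delta)^{3/2}}N$ is positive semidefinite and $\nabla^2 G \succeq (2+2\eta)\mathbf{I} \succ 0$ everywhere on $\mathbb{R}^{2}$.

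For $g>0$ both perturbation eigenvalues are negative, and because $r\ge 0$ gives $r+\delta \ge \delta$, the more negative one (and hence the smallest eigenvalue of $\nabla^2 G$) is $\lambda_{+}$. Hence positive definiteness is equivalent to
\[
(2+2\eta) \;>\; \frac{2g}{(r+\delta)^{1/2}} \quad\Longleftrightarrow\quad r + \delta \;>\; \frac{g^{2}}{(1+\eta)^{2}}.
\]
The hypothesis $r > \tfrac{4}{3}\tfrac{g^{2}}{(1+\eta)^{2}}$ is strictly stronger than $r > \tfrac{g^{2}}{(1+\eta)^{2}}$, and combined with $\delta>0$ it comfortably delivers the displayed inequality, completing the proof.

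There is not really a hard step here: the only non-mechanical move is recognizing the rank-one plus scalar decomposition $N = -(w w^{\mathsf T} + \delta \mathbf{I})$, which bypasses any messy Sylvester-type case split and produces closed-form eigenvalues directly. Everything else is substitution. It is worth noting in passing that the threshold $\tfrac{4}{3}\tfrac{g^{2}}{(1+\eta)^{2}}$ is not tight — the sharp one read off from $\lambda_{+}>0$ is $\tfrac{g^{2}}{(1+\eta)^{2}}-\delta$ — but proving the stated (looser) sufficient condition is all that is required.
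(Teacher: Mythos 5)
Your argument is correct, and it takes a genuinely different route from the paper. The paper clears denominators to form $M=(r+\delta)^{3/2}\nabla^2 G$, checks positive definiteness via the trace and determinant, and for $g>0$ handles the resulting inequality by introducing $h(\delta)=(1+\eta)^2(r+\delta)^3-g^2(r+2\delta)^2$ and showing $h(0),h'(0),h''(0)>0$ suffice; the threshold $\tfrac{4}{3}\tfrac{g^2}{(1+\eta)^2}$ is exactly the binding constraint from $h'(0)>0$ and $h''(0)>0$. You instead diagonalize the perturbation directly via the rank-one decomposition $N=-(ww^{\mathsf T}+\delta\mathbf{I})$ with $w=(z_{im},-z_{re})^{\mathsf T}$, which gives closed-form eigenvalues
\begin{equation*}
\lambda_{+}=(2+2\eta)-\frac{2g}{(r+\delta)^{1/2}},\qquad \lambda_{-}=(2+2\eta)-\frac{2g\delta}{(r+\delta)^{3/2}},
\end{equation*}
and correctly identifies $\lambda_{+}$ as the smaller one when $g>0$ (since $r+\delta\ge\delta$). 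This buys you the exact positive-definiteness region $r+\delta>g^2/(1+\eta)^2$, of which the stated condition is a strict subset, and it makes transparent that the paper's factor $4/3$ is an artifact of its sufficient-condition analysis rather than a sharp constant; the $g\le 0$ case is handled identically in spirit (the perturbation is positive semidefinite). The only caution is that your sharper claim depends on the algebraic identification of $N$ with $-(ww^{\mathsf T}+\delta\mathbf{I})$, which is easy to verify and does hold, so there is no gap.
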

\begin{proof}
Since ${\left(z_{re}^{2}+z_{im}^{2}+\delta\right)^{\frac{3}{2}}}>0 $, we  consider a new matrix 

\begin{equation}
M= \left(r+\delta\right)^{\frac{3}{2}} \cdot \nabla^{2}G.
\end{equation}

We now compute the trace and determinant of $M$:
\begin{equation}
\operatorname{trace}(M)=(4+4 \eta)\left(r+\delta \right)^{\frac{3}{2}}-2g \left(r+2 \delta\right)
\label{trace}
\end{equation}
and
\begin{align}
\operatorname{det}(M)=&4(1+\eta)(r+\delta)^{\frac{3}{2}}\left[(1+\eta)(r+\delta)^{\frac{3}{2}}-g(r+2 \delta)\right]\nonumber+4\delta r g^{2}+4\delta^{2} g^{2}
\label{det}.
\end{align}
Since $M$ is a $2\times 2$ real symmetric matrix, $M$ is positive definite if and only if both $\operatorname{trace}(M)$ and $\operatorname{det}(M)$ are positive.

For $g \leq 0$, $\operatorname{trace}(M)$ and  $\operatorname{det}(M)$ are directly positive from which we can derive the result when $g\leq 0$ in (\ref{zregion}). 

For $g>0$, we define a function $h$ with respect to $\delta$ as follows,
\begin{equation}
h(\delta)=(1+\eta)^2(r+\delta)^{3}-g^2(r+2 \delta)^2 ,\quad \delta > 0.
\end{equation}
 Obviously, $h(\delta)>0$ can guarantee  $\operatorname{trace}(M)$ and  $\operatorname{det}(M)$  positive. 
Take the first and second derivative of $h(\delta)$, and we can derive
\begin{equation}
\begin{aligned}
h'(\delta)& =3(1+\eta)^{2}(r+\delta)^{2}-4 g^{2}(r+2 \delta) 
\end{aligned}
\end{equation}
and
\begin{equation}
\begin{aligned}
h''(\delta)& =6(1+\eta)^{2}(r+\delta)-8g^{2}.
\end{aligned}
\end{equation}

If $h''(0)>0$ and $h'(0)>0$, $h(\delta)$ is increasing for $\delta >0$. Therefore, a sufficient condition making $h(\delta)$ positive can be given by
\begin{equation}
 h''(0)>0, \  h'(0)>0 \ \text{and} \ h(0)>0.
\end{equation}
The corresponding $r$ will be chosen as 
\begin{equation}
\ r> \frac{4}{3}\frac{g^2}{(1+\eta)^2}.
\end{equation} 
\end{proof}

\begin{figure}[h]
\centering
\includegraphics[width=\linewidth]{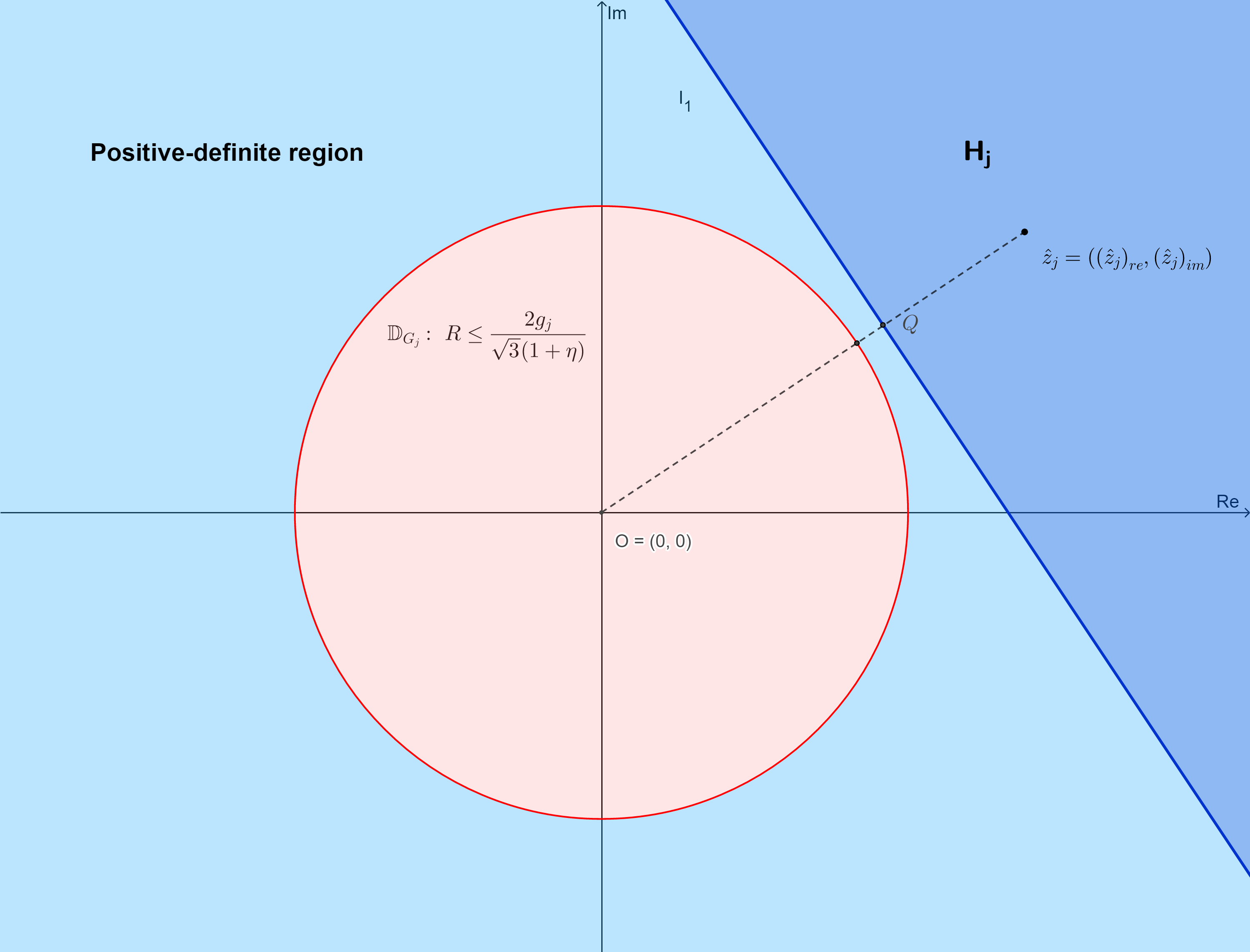} 
\caption{The region where $\nabla^2 G_j$ is positive definite. When $g_j>0$, the positive definite region is not convex.} 
\label{trustregion} 
\end{figure}

\subsection{Closed convex domain}
\Cref{prop:p1} provides a region where $G_j$ is convex. However, this region is not convex. As a final step to reach a convex augmentation model, we need to restrict the domain of $G_j$ further. Let $\mathbb{D}_{G_j}$ denote the complement of the convex region in \Cref{prop:p1}, which is a closed disk. We assume each component of the initial point $\hat{z}$ is in the restricted domain. With this assumption, we can consider the halfspace $H_j$ defined by the tangent plane at the projection of $\hat{z}_j$ to the disk $\mathbb{D}_{G_j}$. In practice, for numerical stability consideration, we slightly shift the tangent plane towards $\hat{z}_j$, and $H_j$ is then defined by this hyperplane. The positive definite region and $H_j$ are illustrated in \Cref{trustregion}. 

Note that there may exists $j$ such that $\hat{z}_j$ belongs to $\mathbb{D}_{G_j}$, which is 
\begin{equation}
   (\hat{z}_j)_{re}^2+(\hat{z}_j)_{im}^2 \leq \frac{4}{3}\frac{g_j^{2}}{(1+\eta)^{2}}. 
\label{disk_con}
\end{equation}
However, with a moderate $\eta$, say $\eta=1$, (\ref{disk_con}) means
\begin{equation}
   (\hat{z}_j)_{re}^2+(\hat{z}_j)_{im}^2 \leq \frac{g_j^{2}}{3}. 
\end{equation}
Since $\hat{z}$ is assumed to be an initial approximated solution, it is reasonable to assume $|\hat{z}|\approx g$. In this case, we can modify $\hat{z}_j$ such that it belongs to the restricted convex region. 

\subsection{Relaxed convex augmentation model}
Recall the convex domain of each $G_j$ is denoted by $H_j$. By restricting the domain, we finally get the following convex augmentation model
\begin{equation}
    \begin{aligned}
        \min_{u \in C^{n}}  E(u)&=\lambda \operatorname{TV}(u)+\sum_{j \in \Gamma} \left(g_j-\sqrt{|(Au)_j|^2+\delta_j} \right) ^{2}\\ &\ +\eta\sum_{j\in\Omega}|(Au)_j-(A\hat{u})_j|^2+\sum_{j\in \Gamma}\mathbb{I}_{H_j}((Au)_j).
    \end{aligned}
\label{convex model}
\end{equation}

In the following, we show the existence of solutions of (\ref{convex model}) under mild assumptions.
\begin{proposition}
Assume that there is a positive number $\beta$ such that 
\begin{equation}
    \beta\|u\|\leq\|Au\|_{2,\Gamma},
\label{coer-assump}
\end{equation}
where $\|z\|_{2,\Gamma}=\sqrt{\sum_{j\in \Gamma}|z_j|^2}$. Then (\ref{convex model}) admits a solution $u^*$.
\end{proposition}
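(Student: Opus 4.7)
The plan is to apply the standard Weierstrass-type existence theorem: any proper, lower semi-continuous, coercive function on the finite-dimensional space $\mathbb{C}^n\cong\mathbb{R}^{2n}$ attains its infimum. I would therefore verify properness, lower semi-continuity, and coercivity of $E$ in turn, and then conclude by extracting a convergent minimizing subsequence.

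For properness, I would evaluate $E$ at $u=\hat{u}$: the TV term and the square-root data-fidelity term are obviously finite, the quadratic penalty vanishes, and, by the standing assumption in the preceding subsection that each $\hat{z}_j$ has been modified (if necessary) to lie in the restricted convex region $H_j$, the indicator sum is also finite. For lower semi-continuity, the TV term is a continuous seminorm, the data-fidelity is continuous because every $\delta_j>0$, and the quadratic discrepancy is continuous. Each $H_j$ is a closed halfspace by construction and $u \mapsto (Au)_j$ is continuous, so every composed indicator $\mathbb{I}_{H_j}\circ(A\cdot)_j$ is l.s.c.; a finite sum of l.s.c.\ functions is l.s.c.

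The only nontrivial step, and the only one that actually uses the standing hypothesis (\ref{coer-assump}), is coercivity. Applying (\ref{coer-assump}) with $u-\hat{u}$ in place of $u$ gives $\|A(u-\hat{u})\|_{2,\Gamma}\geq\beta\|u-\hat{u}\|$, hence
$$\eta\sum_{j\in\Omega}|(Au)_j-(A\hat{u})_j|^2 \;\geq\; \eta\,\|A(u-\hat{u})\|_{2,\Gamma}^{\,2} \;\geq\; \eta\beta^2\,\|u-\hat{u}\|^2.$$
The remaining three terms of $E$ are nonnegative (the indicators take values in $\{0,+\infty\}$, and the TV and data-fidelity terms are sums of nonnegative quantities), so $E(u)\geq\eta\beta^2\|u-\hat{u}\|^2\to+\infty$ as $\|u\|\to\infty$. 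Combining the three properties, any minimizing sequence is bounded by coercivity, admits a convergent subsequence by finite-dimensional compactness, and l.s.c.\ forces the limit to be a minimizer $u^*$ of (\ref{convex model}). I expect the subtlest point to be the coercivity step — specifically, keeping track of the fact that the quadratic penalty is summed over all of $\Omega$ rather than just $\Gamma$, which is precisely what lets the $\Gamma$-restricted lower bound from (\ref{coer-assump}) be applied without losing control of the missing indices.
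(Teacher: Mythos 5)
Your proof is correct and follows the same overall strategy as the paper's, which is a three-line appeal to lower semicontinuity, coercivity, and a standard existence result in finite dimensions. The one substantive difference is where coercivity comes from: the paper reads (\ref{coer-assump}) as saying that $\|Au\|_{2,\Gamma}\to\infty$ as $\|u\|\to\infty$ and (implicitly) lets the $\Gamma$-indexed data term $\sum_{j\in\Gamma}\bigl(g_j-\sqrt{|(Au)_j|^2+\delta_j}\bigr)^2$ supply the blow-up, whereas you extract quadratic growth from the proximity penalty $\eta\sum_{j\in\Omega}|(Au)_j-(A\hat u)_j|^2$ by restricting the sum to $\Gamma\subseteq\Omega$ and applying (\ref{coer-assump}) to $u-\hat u$. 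Your route yields the clean explicit bound $E(u)\geq\eta\beta^2\|u-\hat u\|^2$ and sidesteps having to argue that $\bigl(g_j-\sqrt{|z_j|^2+\delta_j}\bigr)^2$ grows like $|z_j|^2$; both derivations are valid. Your care with properness — noting that finiteness of $E(\hat u)$ requires $\hat z_j\in H_j$, which the paper arranges in the preceding subsection — and with the l.s.c.\ of the composed indicators are details the paper's proof leaves implicit, and the finite-dimensional minimizing-sequence argument you close with is exactly the "standard result" the paper cites.
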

\begin{proof}
Note that $E(u)$ is convex and lower semicontinuous. By (\ref{coer-assump}), $\|Au\|_{2,\Gamma}$ goes to infinity as $\|u\|$ goes to infinity. Hence $E$ is also coercive. By standard result in convex analysis \cite{badiale2010semilinear}, (\ref{convex model}) admits a solution $u^*$.
\end{proof}

\section{Algorithm implementation}
\label{sec:alg}
In what follows, we introduce a semi-proximal alternating direction method of multipliers (sPADMM) to solve (\ref{convex model}). We first rewrite the convex augmentation model (\ref{convex model}) by
\begin{equation}
\begin{array}{l}
 \begin{aligned}
\min _{u ,z,p} \quad E_{\delta}(u, z, p) &=\lambda\|p\|_{1}+\sum_{j \in \Gamma}\left(g_j-\sqrt{|z_j|^{2}+\delta_j}\right)^{2} \\
&\ + \eta \sum_{j \in \Omega}\left|z_j-\hat{z}_j\right|^{2}+\sum_{j\in \Gamma}\mathbb{I}_{H_j}(z_j)
\end{aligned}
\\
\text { such that } \quad z=A u, \quad p=\nabla u,
\end{array}
\label{model:relaxed discrete model}
\end{equation}
where $\mathbb{I}_{H_j}(\cdot)$ is the indicator function of the convex domain $H_j$ of $G_j$ which is defined in (\ref{fn:Gj}). According to previous analyses, the objective function $E_{\delta}(u, z, p)$ is proper, closed and convex.

Since the reformulated convex augmentation model (\ref{model:relaxed discrete model}) contains two linear constraints while the original sPADMM method only contains one, we slightly modify the sPADMM algorithm to fit with our proposed model. The augmented Lagrangian of (\ref{model:relaxed discrete model}) is defined by
\begin{equation}
\begin{aligned}
&L_{\alpha, \gamma}(u, z, p, w, q)\\ &\ =\lambda\|p\|_{1}+\sum_{j \in \Gamma}\left(g_j-\sqrt{|z_j|^{2}+\delta_j}\right)^{2} +\eta \sum_{j \in \Omega}\left|z_j-\hat{z}_j\right|^{2}  \\
&\ + \sum_{j \in \Gamma} \mathbb{I}_{H_j} \left( z_j\right) +\operatorname{Re}(\langle z-A u, w\rangle)+\operatorname{Re}(\langle p-\nabla u, q\rangle) \\
&\ +\frac{\alpha}{2}\|z-A u\|^{2}+\frac{\gamma}{2}\|p-\nabla u\|^{2}.
\end{aligned}
\label{Lmain-equation}
\end{equation}
The corresponding algorithm for solving (\ref{model:relaxed discrete model}) is then shown in \Cref{alg:MpADMMcam}.  

\begin{algorithm}[h]
\caption{Modified semi-proximal ADMM for convex augmentation model (\ref{convex model})}
\label{alg:MpADMMcam}
{\bf Step 0}. Input $ \left(u^{0}, z^{0}, p^{0},w^{0},q^{0}\right) \in \mathbb{C}^n \times \mathbb{C}^{Jn} \times \mathbb{C}^{2n} \times \mathbb{C}^{Jn} \times  \mathbb{C}^{2n} .$ 

{\bf Step 1}. Set 
\begin{equation}
\left\{\begin{aligned}
u^{k+1} &= \underset{u \in \mathbb{C}^n}{\arg \min } \ \operatorname{Re}(\langle z^k-A u, w^k\rangle) 
+\operatorname{Re}(\langle p^k-\nabla u, q^k\rangle)\\
&\ +\frac{\alpha}{2}\|z^k-A u\|^{2}+\frac{\gamma}{2}\|p^k-\nabla u\|^{2}+\left\|u-u^{k}\right\|_{S_1}^{2}, \\
z^{k+1}&=\underset{z \in \mathbb{C}^{Jn}}{\arg \min } \ \sum_{j \in \Gamma}\left(g_j-\sqrt{|z_j|^{2}+\delta}\right)^{2}+\sum_{j \in \Gamma} \mathbb{I}_{\Pi(\hat{z}_j)} \left( z_j\right)\\& \ +\left\|z-z^{k}\right\|_{S_2}^{2}+\eta \sum_{j \in \Omega}\left|z_j-\hat{z}_j\right|^{2}+\frac{\alpha}{2}\|z-Au^{k+1}\|^{2}\\& \ +\operatorname{Re}(\langle z-Au^{k+1}, w^k\rangle),\\
p^{k+1}&=\underset{p \in \mathbb{C}^{2n}}{\arg \min } \ \lambda\|p\|_{1}
+\operatorname{Re}(\langle p-\nabla u^{k+1}, q^k\rangle)\\& \ +\frac{\gamma}{2}\|p-\nabla u^{k+1}\|^{2}+\left\| p-p^k \right\|_{S_3}^2,\\
w^{k+1}&=w^k-\tau\alpha\left(z^{k+1}-Au^{k+1}\right),\\
q^{k+1}&=q^k-\tau\gamma\left(p^{k+1}-\nabla u^{k+1} \right),
\end{aligned}\right.
\label{alg:subproblem}
\end{equation}
where $\alpha>0$, $\gamma>0$ are the penalty parameters in (\ref{Lmain-equation}) , $\tau \in(0,(1+\sqrt{5}) / 2)$ is the step length, and $S_1$, $S_2$ and $S_3$ are self-adjoint positive semidefinite, not necessarily positive definite, operators on $\mathbb{C}^{n}$, $\mathbb{C}^{Jn}$ and $\mathbb{C}^{2n}$  respectively.

{\bf Step 2}. If a termination criterion is not met, go to Step 1.
\label{algorithm}
\end{algorithm}

Next, we demonstrate how to solve each of the three subproblems in (\ref{alg:subproblem}) respectively.
\subsection{u-subproblem}
With a proximal term in $u$ added to the augmented Lagrangian (\ref{Lmain-equation}), the $u$ subproblem can be written as
\begin{equation}
\begin{aligned}
u^{k+1} &= \underset{u \in \mathbb{C}^n}{\arg \min } \ \operatorname{Re}(\langle z^k-Au, w^k\rangle) 
+\operatorname{Re}(\langle p^k-\nabla u, q^k\rangle)\\
&\ +\frac{\alpha}{2}\|z^k-Au\|^{2}+\frac{\gamma}{2}\|p^k-\nabla u\|^{2}+\left\|u-u^{k}\right\|_{S_1}^{2},
\end{aligned}
\label{u1 min}
\end{equation}
where $S_1$ is a positive semidefinite Hermitian operator acting as the
proximity term. (\ref{u1 min}) can be further simplified as the following
equivalent form
\begin{equation}
\begin{aligned}
u^{k+1} = \underset{u \in \mathbb{C}^n}{\arg \min }  &\frac{\alpha}{2}\|z^k-A u+\frac{w^k}{\alpha}\|^{2}+\frac{\gamma}{2}\|p^k
+\frac{q^k}{\gamma}-\nabla u\|^{2}+\left\|u-u^{k}\right\|_{S_1}^{2},
\end{aligned}
\label{u2 min}
\end{equation}
where $A$ and $u$ are rewritten as  $A=A_{r e}+A_{i m} \mathbf{i}$ and  $u=u_{re}+ u_{im} \mathbf{i} $ respectively. Then we can compute that $Au=A_{r e} u_{r e}-A_{i m} u_{i m}+\left(A_{r e} u_{i m}+A_{i m} u_{r e}\right)\mathbf{i}$. Taking $\widetilde{w}^{k}=z^{k}+\frac{w^{k}}{\alpha}$ and $\widetilde{p}^{k}=p^{k}+\frac{q^{k}}{\gamma}$, (\ref{u2 min}) is equivalent to 
\begin{equation}
\begin{aligned}
 u^{k+1} =\underset{u \in \mathbb{C}^n}{\arg \min }  &\frac{\alpha}{2}\| \widetilde{w}^{k}-Au\|^{2}+\frac{\gamma}{2}\|\widetilde{p}^{k}-\nabla u\|^{2}+\left\|u-u^{k}\right\|_{S_1}^{2}.
\end{aligned}
\label{u3 min}
\end{equation}
Consider real and imaginary parts separately, and we get
\begin{equation}
\begin{aligned}
&\frac{\alpha}{2}\| \widetilde{w}^{k}-Au\|^{2}+\frac{\gamma}{2}\|\widetilde{p}^{k}-\nabla u\|^{2}+\left\|u-u^{k}\right\|_{S_1}^{2}\\
& =\frac{\alpha}{2}\| A_{r e} u_{r e}-A_{i m} u_{i m}-\operatorname{Re}(\widetilde{w}^{k})\|^{2}+\frac{\gamma}{2}\|\operatorname{Im}(\widetilde{p}^{k})-\nabla u_{im}\|^{2}\\
&\ +\frac{\alpha}{2}\| A_{r e} u_{im}+A_{i m} u_{re}-\operatorname{Im}(\widetilde{w}^{k})\|^{2}+\frac{\gamma}{2}\|\operatorname{Re}(\widetilde{p}^{k})-\nabla u_{re}\|^{2}\\
&\ -2\left\langle u_{re}-\operatorname{Re}(u^k),\operatorname{Im}(S_1)(u_{im}-\operatorname{Im}(u^k))\right\rangle\\
&\ +\left\|u_{re}-\operatorname{Re}(u^{k})\right\|_{S_1}^{2}+\left\|u_{im}-\operatorname{Im}(u^{k})\right\|_{S_1}^{2}. 
\end{aligned}
\label{u4}
\end{equation}
Since (\ref{u4}) is convex in $u$, thus by taking the gradient of (\ref{u4}), we obtain that
\begin{equation}
\begin{aligned}
&\left[\begin{array}{cc}
  \mathcal{B}_{11} & -\mathcal{B}_{21}\\
\mathcal{B}_{21} & \mathcal{B}_{22}
\end{array}\right]
\cdot \left[\begin{array}{c}
u_{r e} \\
u_{i m}
\end{array}\right]\\
&=\left[\begin{array}{c}
 \alpha\left(\operatorname{Re}\left(A^{*} \widetilde{w}^{k}\right)\right)-\gamma\left(\nabla \cdot \operatorname{Re}\left(\widetilde{p}^{k}\right)\right)+2\operatorname{Re}(S_1u^k) \\
 \alpha\left(\operatorname{Im}\left(A^{*} \widetilde{w}^{k}\right)\right)-\gamma\left(\nabla \cdot \operatorname{Im}\left(\widetilde{p}^{k}\right)\right)+2\operatorname{Im}(S_1u^k)
\end{array}\right],
\end{aligned}
\label{d equation 1}
\end{equation}
where $\mathcal{B}_{11}=\mathcal{B}_{22}=\alpha\operatorname{Re}(A^* A)-\gamma \Delta+2 \operatorname{Re} (S_1)$, $\mathcal{B}_{21}=\alpha\operatorname{Im}(A^* A)+2\operatorname{Im}(S_1)$.

With some mild assumptions, we can show that the $u$-subplobrem (\ref{u1 min}) has a unique solution.
\begin{proposition}
If we take $\operatorname{Im}(S_1)=-\frac{\alpha}{2}\operatorname{Im}(A^* A)$, then the linear equations (\ref{d equation 1}) admit a unique solution.
\label{proposition:unique}
\end{proposition}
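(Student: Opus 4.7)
My plan is to exploit the prescribed choice $\operatorname{Im}(S_1) = -\tfrac{\alpha}{2}\operatorname{Im}(A^*A)$ to decouple the real and imaginary parts of (\ref{d equation 1}), and then to verify that the resulting diagonal block is positive definite.

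First I would substitute the prescribed $\operatorname{Im}(S_1)$ directly into the definition of $\mathcal{B}_{21}$, obtaining $\mathcal{B}_{21} = \alpha\operatorname{Im}(A^*A) - \alpha\operatorname{Im}(A^*A) = 0$. The $2\times 2$ block system then becomes block diagonal, so (\ref{d equation 1}) decouples into two identical real linear systems $\mathcal{B}_{11}u_{re} = \mathrm{RHS}_{re}$ and $\mathcal{B}_{11}u_{im} = \mathrm{RHS}_{im}$ with $\mathcal{B}_{11} = \alpha\operatorname{Re}(A^*A) - \gamma\Delta + 2\operatorname{Re}(S_1)$. Unique solvability of the block system is therefore equivalent to invertibility of $\mathcal{B}_{11}$ on $\mathbb{R}^n$.

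Next I would show that each summand of $\mathcal{B}_{11}$ is real symmetric positive semidefinite. The Laplacian term $-\gamma\Delta = \gamma\nabla^*\nabla$ is PSD by construction. For the other two, I would invoke the elementary fact that for any Hermitian positive semidefinite $H \in \mathbb{C}^{n\times n}$, $\operatorname{Re}(H)$ is real symmetric positive semidefinite: writing $H = M + N\mathbf{i}$ with $M$ real symmetric and $N$ real antisymmetric (as forced by $H^*=H$), for any real $x$ one has $x^T H x = x^T M x + \mathbf{i}\, x^T N x = x^T M x$ because $x^T N x = 0$, and nonnegativity of the left-hand side yields $M \ge 0$. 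Applying this to the Hermitian PSD matrices $A^*A$ and $S_1$ disposes of the remaining two summands.

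Finally, to upgrade positive semidefiniteness to positive definiteness, I would invoke the CDP structure recalled in \Cref{sec:intro}: $A^*A = \sum_{j=1}^J D_j^*D_j$ is a real nonnegative diagonal matrix, and under the mild standard assumption that the masks together have a nonzero entry at every pixel, its diagonal entries are strictly positive. Since $\alpha > 0$, the first summand $\alpha\operatorname{Re}(A^*A) = \alpha A^*A$ is then positive definite, and adding the two PSD summands preserves this. Hence $\mathcal{B}_{11}$ is invertible and (\ref{d equation 1}) has a unique solution. The only subtlety I foresee is pinning down precisely which "mild assumption" on the masks (or equivalently on $A^*A$) is being invoked; once that is fixed, the remainder is routine linear algebra.
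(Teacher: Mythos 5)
Your proof is correct and follows the same overall strategy as the paper's: substitute the prescribed $\operatorname{Im}(S_1)$ to make the off-diagonal block $\mathcal{B}_{21}$ vanish, reduce unique solvability of (\ref{d equation 1}) to positive definiteness of $\mathcal{B}_{11}=\alpha\operatorname{Re}(A^*A)-\gamma\Delta+2\operatorname{Re}(S_1)$, and check that each summand is positive semidefinite. Where you diverge is in which summand supplies the \emph{strict} definiteness. The paper credits the Laplacian term, asserting that $\Delta$ is negative definite; with the difference operators of \Cref{sec:notations} (last row/column set to zero), $\nabla$ annihilates constant images, so $-\Delta=\nabla^{*}\nabla$ is only positive semidefinite, and that step of the paper's argument needs exactly the kind of rescue you supply. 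You instead credit $\alpha\operatorname{Re}(A^*A)$, using the CDP structure ($A^*A$ real diagonal) together with the assumption that the masks do not all vanish at any pixel --- an assumption satisfied by the masks used in the experiments and consistent with the paper's own framing (``with some mild assumptions''), though not written into the proposition itself, and you flag this honestly. The trade-off is that your route imposes an explicit hypothesis on $A$ which the paper's version avoids only at the cost of an unjustified claim about $\Delta$; a third way to close the gap, requiring neither, is to take $\operatorname{Re}(S_1)\succ0$, which is harmless since \Cref{thm:convergence} already demands a positive definite $S_1$ for convergence. Your supporting lemma that $\operatorname{Re}(H)$ is real symmetric positive semidefinite for Hermitian positive semidefinite $H$ is correct and is the right level of generality for handling both $A^*A$ and $S_1$.
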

\begin{proof}
Given $\operatorname{Im}(S_1)=-\frac{\alpha}{2}\operatorname{Im}(A^* A)$, we define the coefficient matrix
\begin{equation}
\mathcal{B}:=\left[\begin{array}{cc}
  \mathcal{B}_{11} & 0\\
0 & \mathcal{B}_{22}
\end{array}\right],
\end{equation}
where $\mathcal{B}_{11},\mathcal{B}_{22}$ are defined as above.

We will show that the linear operator $\mathcal{B}$ is nonsingular. To achieve this goal, we calculate
$$
\begin{aligned}
&\left\langle\left(\begin{array}{l}
u_{re} \\
u_{im}
\end{array}\right),\mathcal{B} \left(\begin{array}{l}
u_{re} \\
u_{im}
\end{array}\right)\right\rangle\\
&=\left\langle u_{re},\alpha \operatorname{Re}\left(A^* A\right) u_{re}-\gamma \Delta u_{re}+2\operatorname{Re}(S_1)u_{re}\right\rangle\\
&\ +\left\langle u_{im}, \alpha \operatorname{Re}\left(A^* A\right) u_{im}-\gamma \Delta u_{im}+2\operatorname{Re}(S_1)u_{im}\right\rangle\\
&= \alpha\left\langle u_{re},\operatorname{Re}\left(A^* A\right) u_{re} \right\rangle + 2\left\langle u_{re}, \operatorname{Re}(S_1) u_{re} \right\rangle \\
&\ +\alpha\left\langle u_{im},\operatorname{Re}\left(A^* A\right) u_{im} \right\rangle + 2\left\langle u_{im}, \operatorname{Re}(S_1) u_{im} \right\rangle \\
& \ +\gamma\left(\left\langle-\Delta u_{re}, u_{re}\right\rangle
+\left\langle-\Delta u_{im}, u_{im}\right\rangle\right),
\end{aligned}
$$
where the Laplace operator $\Delta$ is negative definite, $\operatorname{Re}\left(A^{*} A\right)=A_{re}^{T} A_{re}+A_{im}^{T} A_{im} $  and $\operatorname{Re} (S_1)= \left(S_1+\Bar{S_1}\right)/2 $ are positive semidefinite. Then it follows that $\mathcal{B}$ is also positive definite and hence non-singular. Finally, the corresponding solution of (\ref{d equation 1}) can be written as
\begin{equation}
\begin{aligned}
u^{k+1}=&\left(\alpha \operatorname{Re}\left(A^{*} A\right)-\gamma \Delta+2 \operatorname{Re}(S_1)\right)^{-1} \cdot \left(\alpha A^{*} \widetilde{w}^{k}-\gamma\nabla \cdot \left(\widetilde{p}^{k}\right)+2S_1 u^k\right).
\end{aligned}
\end{equation}
\end{proof}

CDP measurements are considered in our experiments. For such patterns, $A^*A$ is a real diagonal matrix with different diagonal entries. In particular, $\operatorname{Im}(A^* A) = 0$. By taking a real positive semidefinite $S_1$, we can get a new linear equation
\begin{equation}
\begin{aligned}
&\mathcal{B} \left[\begin{array}{c}
u_{r e} \\
u_{i m}
\end{array}\right]
=\left[\begin{array}{c}
 \alpha\left(\operatorname{Re}\left(A^{*} \widetilde{w}^{k}\right)\right)-\gamma\left(\nabla \cdot \operatorname{Re}\left(\widetilde{p}^{k}\right)\right)+2\operatorname{Re}(S_1u^k) \\
 \alpha\left(\operatorname{Im}\left(A^{*} \widetilde{w}^{k}\right)\right)-\gamma\left(\nabla \cdot \operatorname{Im}\left(\widetilde{p}^{k}\right)\right)+2\operatorname{Im}(S_1u^k)
\end{array}\right],
\end{aligned}
\label{d equation 2}
\end{equation}
where $\mathcal{B}$ is defined in \Cref{proposition:unique}. 

Unfortunately, the FFT can not be directly applied to (\ref{d equation 2}). However, since the coefficient matrix $\mathcal{B}$ is sparse and symmetric, one can use the conjugate gradient (CG) method or biconjugate gradient (BICG) to solve (\ref{d equation 2}) efficiently. 
 
\subsection{z-subproblem}
 The $z$-subproblem can be written as
 \begin{equation}
 \begin{aligned}
 z^{k+1} =\underset{z \in \mathbb{C}^{Jn}}{\arg \min } &\sum_{j \in \Gamma}\left(g_j-\sqrt{|z_j|^{2}+\delta}\right)^{2}+\sum_{j \in \Gamma} \mathbb{I}_{H_j} \left( z_j\right)+\left\|z-z^{k}\right\|_{S_2}^{2} +\eta \sum_{j \in \Omega}\left|z_j-\hat{z}_j\right|^{2}\\
 &+\frac{\alpha}{2}\|z-Au^{k+1}\|^{2}+\operatorname{Re}(\langle z-Au^{k+1}, w^k\rangle).
 \end{aligned} 
 \label{z1}
 \end{equation}
 
 Considering the case where $S_2$ is diagonal, for convenience, we let
\begin{equation}
    \operatorname{T}_j=\frac{\alpha \left(Au^{k+1}\right)_j+2\eta \hat{z}_j-w^{k}_j+2 (S_2)_j z^{k}_j}{\alpha+2\eta+2 (S_2)_j}
\end{equation}
and
\begin{equation}
    \operatorname{coef}_j=\frac{\alpha}{2}+\eta+(S_2)_j,
\end{equation} 
where $(S_2)_j$ for $ j \in \Omega$ is the $j^{th}$ diagonal element of $S_2$.

We can reformulate (\ref{z1}) as
 \begin{equation}
 \begin{aligned}
 z^{k+1}=\underset{z \in \mathbb{C}^{Jn}}{\arg \min } &\sum_{j \in \Gamma}\left(g_j-\sqrt{|z_j|^{2}+\delta}\right)^{2}+\sum_{j \in \Gamma} \mathbb{I}_{H_j} \left( z_j\right)  + \sum_{j\in \Omega }\operatorname{coef}_j \cdot \left|z_j-T_j\right|^{2}.
  \end{aligned}
 \label{z2}
 \end{equation}

It is straightforward that the minimization concerning $z$ is equivalent to minimizing each entry $z_j$ independently. 

For $j \in \Omega \setminus \Gamma $, an optimal solution for (\ref{z2}) is
\begin{equation}
    z^{*}_j=\operatorname{T}_j.
\end{equation}

As for $j \in \Gamma,$ we first minimize (\ref{z2}) without considering the convex set constraint term $\mathbb{I}_{H_j}\left( \cdot \right)$. To be more explicit, we consider a new problem with respect to $z_j$
\begin{equation}
 \begin{aligned}
 z^*_j=\underset{z_j \in \mathbb{C}}{\arg \min }&\left(g_j-\sqrt{|z_j|^{2}+\delta_j}\right)^{2}+ \operatorname{coef}_j\cdot\left|z_j-\operatorname{T}_j\right|^{2}.
 \end{aligned} \\
 \label{z_i sub equation}
 \end{equation}
 
In what follows, we denote the objective function in (\ref{z_i sub equation})  as $E_z$. Since $z_j=|z_j|\cdot \operatorname{sign}(z_j)$, we minimize (\ref{z_i sub equation}) with respect to $|z_j|$ and $\operatorname{sign}(z_j)$ respectively (where $\operatorname{sign}(z_j)=\frac{z_j}{|z_j|}$ if $z_j\neq 0$; otherwise $\operatorname{sign}(0)=\theta$ with an arbitrary constant $\theta \in \mathbb{C}$ with unit length). We can easily obtain 
\begin{equation}
    \operatorname{sign}\left(z^{*}_j\right)=\operatorname{sign} \left(\operatorname{T}_j\right)
\end{equation}
by the same argument as before.

To minimize the relaxed subproblem (\ref{z_i sub equation}) with respect to $|z_j|$, we consider
 \begin{equation}
 \begin{aligned}
 \rho^*=\underset{\rho \in \mathbb{R}^{+}}{\arg \min }&\left(g_j-\sqrt{\rho^{2}+\delta_j}\right)^{2}+ \operatorname{coef}_j\cdot\left(\rho-\left| \operatorname{T}_j\right|\right)^{2}.
 \end{aligned} 
 \label{rho-equation}
 \end{equation}

By the first order optimality condition to (\ref{rho-equation}), we can get: 
\begin{equation}
a (\rho^*)^{4}+b (\rho^*)^{3}+c (\rho^*)^2+d\rho^*+e=0
\label{optimality condition z}
\end{equation}
with
\begin{equation*}
\begin{aligned}
a&=\left( \operatorname{coef}_j+1 \right)^2 ,\\
b&=-2\operatorname{coef}_j\cdot\left( \operatorname{coef}_j+1 \right)\left| \operatorname{T}_j\right|,\\
c&=\delta_j\left( \operatorname{coef}_j+1 \right)^2+ \operatorname{coef}_j^2\cdot\left|  \operatorname{T}_j\right|^{2}-g_j^2 ,\\
d&=-2\delta_j \cdot \operatorname{coef}_j\cdot\left( \operatorname{coef}_j+1 \right)\left| \operatorname{T}_j\right|, \\
e&=\left( \operatorname{coef}_j+1 \right)^2\left|  \operatorname{T}_j\right|^2.
\end{aligned}
\end{equation*}

There are four roots for (\ref{optimality condition z}), and we should choose the real non-negative roots. Unfortunately, there may be no real non-negative roots, or there may be multiple minimum points. In these cases, we will take $\rho^*=0$ or the smallest $\rho^*$ as the final solution, respectively.

Recall that the objective function $E_z\left(z_j\right)$ in (\ref{z_i sub equation}) does not contain the indicator term $\mathbb{I}_{H_j} \left( z_j\right)$. If the optimal solution $z_j^*$ happends to be in $H_j$, then we obtain an optimal solution to (\ref{z2}). If this is not the case, the solution to (\ref{z_i sub equation}) is then used as an initial point of a projected gradient descent method for solving (\ref{z2}). To summarize, the final optimal solution of (\ref{z2}) is given by
\begin{equation}
\begin{aligned}
z^{k+1}_j= \left\{\begin{array}{cc}
z^*_j , &  j \in  \Gamma , \\
\operatorname{sign}\left(\operatorname{T}_j\right)\cdot \rho^*_j , &  z^*_j \in H_j \ \text{and} \  j \in \Omega\backslash \Gamma  ,\\
 \operatorname{PG}_{H_j}\left[ E_z(z_j) \right] , & z^*_j \notin H_j \ \text{and} \  j \in \Omega\backslash \Gamma,
\end{array}\right.
\end{aligned}
\label{zregion2}
\end{equation}
where $\rho^*_j$ denotes the final solution of (\ref{rho-equation}) for index $j$, and $\operatorname{PG}_{H_j}\left[ E_z(z_j) \right]$ is the output of projected gradient algorithm for minimizing $E_z(z_j)$ over $H_j$.

\subsection{p-subproblem}  The $p$-subproblem is equivalent to
\begin{equation}
\begin{aligned}
p^{k+1}=\underset{p \in \mathbb{C}^{2n}}{\arg \min } \ &\lambda\|p\|_{1}
+\operatorname{Re}(\langle p-\nabla u^{k+1}, q^k\rangle)+\frac{\gamma}{2}\|p-\nabla u^{k+1}\|^{2}+\left\| p-p^k \right\|_{S_3}^2.
\end{aligned}
\label{p equation 1}
\end{equation}

We simply consider a diagonal proximal matrix $S_3$, and for each index $j$ we have
\begin{equation}
\begin{aligned}
    p^{k+1}(j)&=\underset{p_j \in \mathbb{C}}{\arg \min } \ \lambda|p_j|
+\left(\frac{\gamma}{2}+(S_3)_{j}\right)\left|p_j-\frac{\gamma \left(\nabla u^{k+1}\right)_j-2 q^{k}_j+2 (S_3)_{j} p^{k}_j}{\gamma+2 (S_3)_{j}}\right|^{2},
\end{aligned}
\end{equation}
where $(S_3)_{j}$ is the $j^{th}$ diagonal element of $S_3$.

The minimizer is 
\begin{equation}
\begin{aligned}
    p^{k+1}_j&=\mathcal{T}_{\frac{\lambda}{\gamma+2(S_3)_j}}\left(  \frac{\gamma \left(\nabla u^{k+1}\right)_j-2 q^{k}_j+2 (S_3)_j p^{k}_j}{\gamma+2 (S_3)_j}  \right),
    \end{aligned}
\end{equation}
where $\mathcal{T}_{\lambda}(x)=\operatorname{sign}(x)(|x|-\lambda)_{+} $ with $x\in \mathbb{C}$ is the soft thresholding operator.

The convergence result of the \Cref{alg:MpADMMcam} is given as follows. Please refer to \ref{app:proof} for the proof.

\begin{theorem}\label{thm:convergence}
Let ${(u^k,z^k,p^k,w^k,q^k)}$ be the sequence generated by sPADMM. Suppose $S_1$ is positive definite. If $\tau\in(0,(1+\sqrt{5})/2)$, then ${(u^k,z^k,p^k,w^k,q^k)}$ converges to an optimal point of (\ref{model:relaxed discrete model}).
\end{theorem}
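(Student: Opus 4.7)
The plan is to adapt the standard convergence analysis of semi-proximal ADMM, as in Fazel \textit{et al.}~\cite{fazel2013hankel} and Li \textit{et al.}~\cite{li2015convergent}, to accommodate the two linear coupling constraints $z=Au$ and $p=\nabla u$ rather than the single constraint treated in the classical setting. Since $E_\delta$ is proper, closed and convex and both constraints are linear (so a Slater-type qualification holds trivially), there exists a primal-dual saddle point $(u^*,z^*,p^*,w^*,q^*)$ characterized by the KKT system
\begin{equation*}
A^*w^*+\nabla^*q^*=0,\quad -w^*\in\partial_z G(z^*),\quad -q^*\in\lambda\partial\|p^*\|_1,\quad z^*=Au^*,\quad p^*=\nabla u^*,
\end{equation*}
where $G$ collects the $z$-dependent part of $E_\delta$ together with the indicators $\mathbb{I}_{H_j}$. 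Any limit point of the iterates satisfying this system is automatically optimal for \eqref{model:relaxed discrete model}.

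Starting from the first-order optimality conditions of the three subproblems in \eqref{alg:subproblem}, the next step is to derive a fundamental monotonicity inequality. Subtracting the KKT relations above and exploiting the monotonicity of the convex subdifferentials of $G$ and $\lambda\|\cdot\|_1$ yields bilinear inequalities in $u^{k+1}-u^*$, $z^{k+1}-z^*$ and $p^{k+1}-p^*$ that involve $w^{k+1}-w^*$, $q^{k+1}-q^*$ together with the proximal corrections driven by $S_1,S_2,S_3$. Substituting the dual updates $w^{k+1}=w^k-\tau\alpha(z^{k+1}-Au^{k+1})$ and $q^{k+1}=q^k-\tau\gamma(p^{k+1}-\nabla u^{k+1})$ then eliminates the multipliers in favour of primal residuals.

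The heart of the argument is to construct a Lyapunov function of the form
\begin{equation*}
\Phi_k=\tfrac{1}{\tau\alpha}\|w^k-w^*\|^2+\tfrac{1}{\tau\gamma}\|q^k-q^*\|^2+\|u^k-u^*\|_{S_1}^2+\|z^k-z^*\|_{S_2}^2+\|p^k-p^*\|_{S_3}^2+\mathcal{R}_k,
\end{equation*}
where $\mathcal{R}_k$ collects appropriately weighted feasibility residuals, and to show that for $\tau\in(0,(1+\sqrt{5})/2)$ there exists $c(\tau)>0$ with
\begin{equation*}
\Phi_k-\Phi_{k+1}\ge c(\tau)\bigl(\|u^{k+1}-u^k\|_{S_1}^2+\alpha\|z^{k+1}-Au^{k+1}\|^2+\gamma\|p^{k+1}-\nabla u^{k+1}\|^2\bigr).
\end{equation*}
Because the two coupling constraints involve disjoint auxiliary variables, the $(w,z)$ and $(q,p)$ blocks decouple in the descent estimate, and the classical Glowinski range for $\tau$ is preserved. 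Monotonicity of $\Phi_k$ forces boundedness of $\{w^k\}$, $\{q^k\}$ and, using $S_1\succ 0$, also of $\{u^k\}$; the dual updates then propagate boundedness to $\{z^k\}$ and $\{p^k\}$, and summability of the right-hand side gives $u^{k+1}-u^k\to 0$ together with vanishing primal residuals, hence $z^{k+1}-z^k\to 0$ and $p^{k+1}-p^k\to 0$.

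Finally, I would extract a convergent subsequence, pass to the limit in each subproblem's optimality condition using the outer semicontinuity of the convex subdifferentials of $G$ and $\|\cdot\|_1$ to identify the limit as a saddle point, and then, since the Lyapunov inequality is valid with any saddle point as reference, take that limit as $(u^*,z^*,p^*,w^*,q^*)$ and apply an Opial-type argument to upgrade subsequential convergence to full-sequence convergence. The main obstacle I anticipate is the careful accounting of the cross terms produced when both multipliers $w^k$ and $q^k$ are updated simultaneously with the same relaxation parameter $\tau$: one must verify that the quadratic forms obtained after completing the square remain positive on the whole interval $(0,(1+\sqrt{5})/2)$, and use the positive definiteness of $S_1$ precisely where boundedness of $\{u^k\}$ must be converted into convergence, since $A^*A$ alone can be singular in under-sampled regimes.
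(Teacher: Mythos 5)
Your proposal follows essentially the same route as the paper's proof: both adapt the sPADMM analysis of Fazel \emph{et al.} to the two decoupled constraints, build the same Lyapunov quantity (the paper's $\theta$ plus residual terms, split into the cases $\tau\le 1$ and $\tau>1$ to keep the descent coefficient positive on $(0,(1+\sqrt{5})/2)$), invoke positive definiteness of $S_1$ exactly where boundedness of $\{u^k\}$ is needed, and upgrade subsequential to full convergence by re-centering the Lyapunov function at the cluster point. The plan is sound and matches the paper's argument in all essential respects.
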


\section{Experimental results}
\label{sec:experiments}
In this section, we focus on Fourier measurements which is common in PR problems. In fact, we consider the specific linear operator $A$, CDP with random masks. For coded diffraction patterns, all the elements of $D_j$  are randomly chosen from $\{\pm \sqrt{2} / 2, \pm \sqrt{2} \mathbf{i} / 2, \pm \sqrt{3}, \pm \sqrt{3} \mathbf{i}\}$ in our experiments.

The real-valued 'Cameraman', 'Livingroom', 'Peppers', 'Pirate' and 'Woman' images with resolution $256 \times 256$ are used as the testing images. 
The testing images are shown in \Cref{testing fig}. Besides, we generate noisy observation measurements according to (\ref{noisy model}) where $\xi$ is additive i.i.d Gaussian noise with mean $0$ and noise level $\sigma$.

\begin{figure}[htbp]
\centering
\begin{minipage}[t]{0.19\linewidth}
\centering
\includegraphics[width=\linewidth]{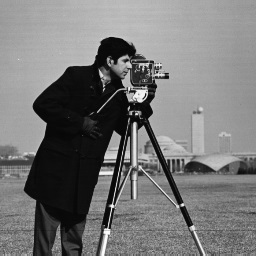}
\centerline{\scriptsize (a)}
\end{minipage}
\begin{minipage}[t]{0.19\linewidth}
\centering
\includegraphics[width=\linewidth]{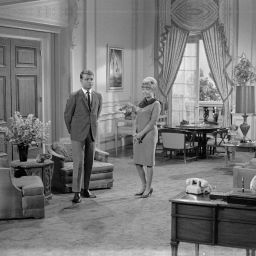}
\centerline{\scriptsize (b)}
\end{minipage}
\begin{minipage}[t]{0.19\linewidth}
\centering
\includegraphics[width=\linewidth]{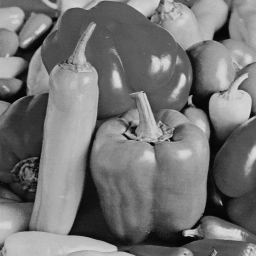}
\centerline{\scriptsize (c)}
\end{minipage}
\begin{minipage}[t]{0.19\linewidth}
\centering
\includegraphics[width=\linewidth]{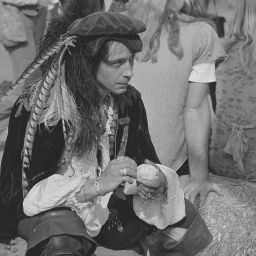}
\centerline{\scriptsize (d)}
\end{minipage}
\begin{minipage}[t]{0.19\linewidth}
\centering
\includegraphics[width=\linewidth]{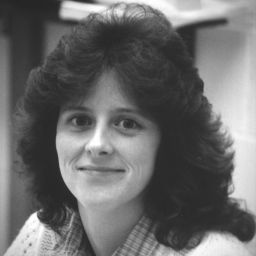}
\centerline{\scriptsize (e)}
\end{minipage}

\centering
\caption{The testing images. The real-valued images: $(a)$ ‘Cameraman’ with resolution $256\times 256$, $(b)$ ‘Livingroom’ with resolution $256 \times 256$, $(c)$ ‘Peppers’ with resolution $256 \times 256$, $(d)$ ‘Pirate’ with  resolution $256 \times 256$ and $(e)$ ‘Woman’ with resolution $256 \times 256$.}
\label{testing fig}
\end{figure}

To measure the reconstruction quality, we use the signal-noise-ratio (SNR), which is defined as
$$
\operatorname{SNR}\left(u, f\right)=-10 \log _{10} \frac{\sum_{j=1}^n\left|u_j-c^{*} f_j\right|^{2}}{\sum_{j=1}^n|u_j|^{2}},
$$
where $f$ is the ground-truth image of size $n\times 1$, $u$ is the reconstructed image, and $c^{*}$ denotes the global phase factor as $c^{*}=\arg \min _{\{c \in \mathbb{C}:|c|=1\}}\left\|u-c f\right\| .$ 

\subsection{Comparison to other PR methods}
In this experiment, we compare our proposed model with five PR methods under noisy CDP measurements: Wirtinger flow (WF) \cite{candes2015phase}, truncated Wirtinger flow (TWF) \cite{chen2017solving}, error reduction (ER) \cite{gerchberg1972practical}, relaxed averaged alternating reflection (RAAR) \cite{wang2017solving}, and TVB \cite{chang2018total}. The codes of TVB are provided by the corresponding author. For TWF and WF methods, we implement the code Phasepack-Matlab\footnote{Available at https://github.com/tomgoldstein/phasepack-matlab.}. As for the remaining three methods, we implement them according to \cite{chang2018total}. Since $A^*A$ is invertible and $\operatorname{Im}(A^*A)=0$ in this case, we introduce two projection operators as $\mathcal{P}_{1}(z)=g \circ \operatorname{sign}(z)$ and $\mathcal{P}_{2}(z)=A\left(A^{*} A\right)^{-1} A^{*} z$, with measurements $g$. Then the iterative algorithms for ER and RAAR with relaxed parameters $\phi>0$ are as follows:
\begin{equation}
\left\{\begin{aligned}
\mathrm{ER:} \ & z^{k+1}=\mathcal{P}_{2} \mathcal{P}_{1}\left(z^{k}\right) \\
\text {RAAR:} \  & z^{k+1}=\left(2 \phi \mathcal{P}_{2} \mathcal{P}_{1}+\phi \mathcal{I}-\phi \mathcal{P}_{2}+(1-2 \phi) \mathcal{P}_{1}\right)\left(z^{k}\right)
\end{aligned}\right.
\end{equation}
for $k=0,1,\cdots$. After getting $z^{k+1}$, we finally compute 
\begin{equation}
u^{k+1}=\left(A^{*} A\right)^{-1} A^{*} z^{k+1}
\end{equation}
as the updated $u$. For real-valued image, we use the modified projection operator $\widetilde{\mathcal{P}}_{2}(z)=A \operatorname{Re}(\left(A^{*} A\right)^{-1} A^{*} z)$ instead of $\mathcal{P}_{2}$ and update $u$ as $u^{k+1}=\operatorname{Re}(\left(A^{*} A\right)^{-1} A^{*} z^{k+1})$. In our experiments, we set $\phi=0.85$ for the above iterative algorithms. Since TVB and our method are regularized methods, for a fair comparison, we apply a denoising procedure on the outputs of ER, RAAR, WF and TWF. BM3D \cite{dabov2007image}, which has great visual and numerical results, is used as the denoising method.

We first conduct the real-valued experiment for noisy CDP measurements with $J=2$ and two noise levels, $\sigma=10$ and $ 20$. The SNR values of all the recovered images are reported in \Cref{table:table-snr sigma}. The recovered images of the compared methods with $\sigma=10$ and $\sigma=20$ are shown in \Cref{sig10} and \Cref{sig20} respectively. Since the pixel value of the test image will be converted into the range of 0 to 1, $\|z-A u\|^{2}$ will be significantly larger than $\|p-\nabla u\|^2$ in (\ref{Lmain-equation}). Hence, we choose $\alpha$ around $1\times 10^1$ and $\gamma$ around $ 3\times 10^5$. $\eta$ determines the degree of dependence of the final solution of the model on the initial point. From (\ref{disk_con}), we choose $\eta$ around $1$. In all the experiments, the parameters of the proposed method are chosen to be $\delta_j=1\times 10^{-2}$ for all $j$, $\alpha=3$, $\gamma=5\times 10^5$, and $\eta=1$. The proximal terms are simply chosen as  the identity matrix for $S_1$, $S_2$ and $S_3$. For the first experiment, we choose $\lambda=2\times 10^3$ and  $\lambda=1\times 10^4$ for noise level $\sigma=10$ and $\sigma=20$, respectively.

\begin{table}[]
\centering
\caption{The SNRs of reconstruction image from different PR methods for $\sigma=10$ and $\sigma=20$ with $J=2$.}
\label{table:table-snr sigma}
\resizebox{\linewidth}{!}{%
\begin{tabular}{|c|c|c|c|c|c|c|}
\hline
\multirow{4}{*}{$\sigma=10$} & ER      & RAAR      & WF      & TWF      & TVB            & Ours           \\ \cline{2-7} 
                             & 18.88   & 18.86     & 18.41   & 15.113   & 25.80          & \textbf{26.49} \\ \cline{2-7} 
                             & ER+BM3D & RAAR+BM3D & WF+BM3D & TWF+BM3D & Initialization &                \\ \cline{2-7} 
                             & 25.87   & 25.85     & 25.74   & 22.40    & 25.79          &                \\ \hline
\multirow{4}{*}{$\sigma=20$} & ER      & RAAR      & WF      & TWF      & TVB            & Ours           \\ \cline{2-7} 
                             & 12.60   & 12.45     & 12.79   & 10.70    & 22.48          & \textbf{22.62} \\ \cline{2-7} 
                             & ER+BM3D & RAAR+BM3D & WF+BM3D & TWF+BM3D & Initialization &                \\ \cline{2-7} 
                             & 21.90   & 21.83     & 21.64   & 19.38    & 21.39          &                \\ \hline
\end{tabular}%
}
\end{table}

The proposed method showed improvement in both numerical values and visual results over the compared methods.  We run five different PR methods and the denoised versions of ER, RAAR, WF, and TWF to compare their effectiveness. ER and BM3D will be used as the initial procedures, where we run ER for $40$ iterations, then a rough initialization will be generated based on the output of ER using BM3D. The proposed method's SNR value is at least 7dB higher than ER, RAAR, WF, and TWF when $\sigma=10$. What's more, after using BM3D as the denoising algorithm, our method still outperformed the denoised ones. Similarly, compared to a total-variation-based method TVB, we can achieve almost 0.7dB of improvement. Interestingly, for a higher noise level $\sigma=20$, the SNR value of the proposed method is even drastically higher than ER, RAAR, WF, and TWF. Visually, the compared methods produce very noisy results. With the TV regularization term, the proposed method and TVB are robust to noise and the proposed method can produce recovered images with a visually clearer background, showcasing the proposed method's effectiveness in the presence of noise.


       



\begin{figure}[htbp]
\begin{minipage}[t]{0.24\linewidth}
\centering
\includegraphics[width=.9\linewidth]{images/cameraman.jpg}
\centerline{\scriptsize (a) Original image }
\end{minipage}
\begin{minipage}[t]{0.24\linewidth}
\centering
\includegraphics[width=.9\linewidth]{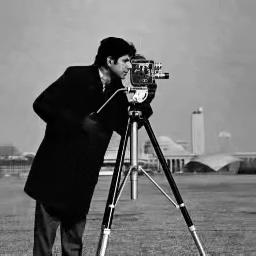}
\centerline{\scriptsize (b) Initilization}
\centerline{\scriptsize SNR: 25.79 }
\end{minipage}
\begin{minipage}[t]{0.24\linewidth}
\centering
\includegraphics[width=.9\linewidth]{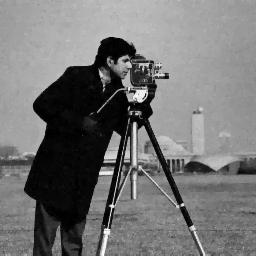}
\centerline{\scriptsize (c) TVB}
\centerline{\scriptsize SNR: 25.80 }
\end{minipage}
\begin{minipage}[t]{0.24\linewidth}
\centering
\includegraphics[width=.9\linewidth]{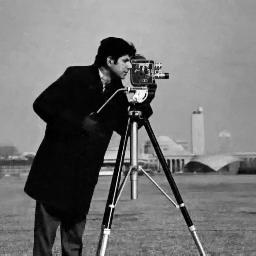}
\centerline{\scriptsize (d) Ours}
\centerline{\scriptsize SNR: \textbf{26.49} }
\end{minipage}

\begin{minipage}[t]{0.24\linewidth}
\centering
\includegraphics[width=.9\linewidth]{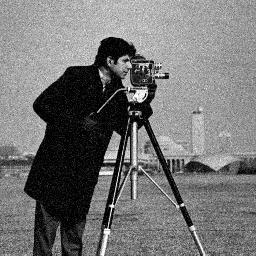}
\centerline{\scriptsize (e) ER}
\centerline{\scriptsize SNR: 18.88 }
\end{minipage}
\begin{minipage}[t]{0.24\linewidth}
\centering
\includegraphics[width=.9\linewidth]{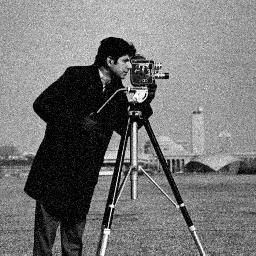}
\centerline{\scriptsize (f) RAAR}
\centerline{\scriptsize SNR: 18.86 }
\end{minipage}
\begin{minipage}[t]{0.24\linewidth}
\centering
\includegraphics[width=.9\linewidth]{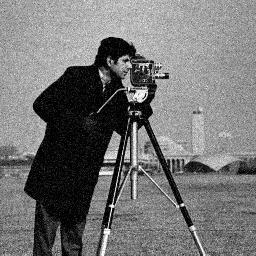}
\centerline{\scriptsize (g) WF}
\centerline{\scriptsize SNR: 18.41}
\end{minipage}
\begin{minipage}[t]{0.24\linewidth}
\centering
\includegraphics[width=.9\linewidth]{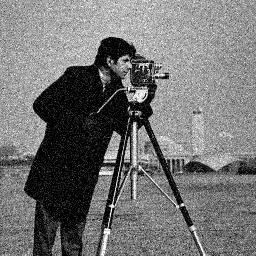}
\centerline{\scriptsize (h) TWF}
\centerline{\scriptsize SNR: 15.13 }
\end{minipage}

\begin{minipage}[t]{0.24\linewidth}
\centering
\includegraphics[width=.9\linewidth]{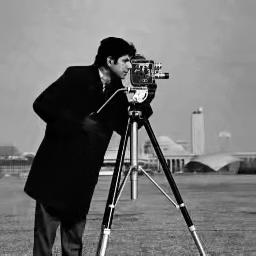}
\centerline{\scriptsize (i) ER+BM3D}
\centerline{\scriptsize SNR: 25.87 }
\end{minipage}
\begin{minipage}[t]{0.24\linewidth}
\centering
\includegraphics[width=.9\linewidth]{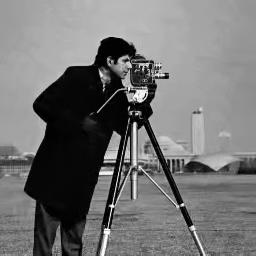}
\centerline{\scriptsize (j) RAAR+BM3D}
\centerline{\scriptsize SNR: 25.85 }
\end{minipage}
\begin{minipage}[t]{0.24\linewidth}
\centering
\includegraphics[width=.9\linewidth]{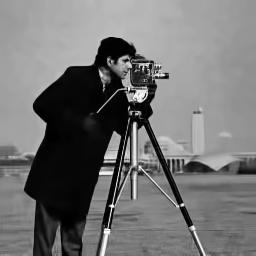}
\centerline{\scriptsize (k) WF+BM3D}
\centerline{\scriptsize SNR: 25.74}
\end{minipage}
\begin{minipage}[t]{0.24\linewidth}
\centering
\includegraphics[width=.9\linewidth]{images/exper1212/TVBL2sigma10.jpg}
\centerline{\scriptsize (l) TWF+BM3D}
\centerline{\scriptsize SNR: 22.40 }
\end{minipage}
\caption{Comparison of PR methods from noisy CDP measurements (\ref{CDP})  with $J=2$ and noise level $ \sigma=10$. }
\label{sig10}
\end{figure}


       


\begin{figure}[htbp]
\begin{minipage}[t]{0.24\linewidth}
\centering
\includegraphics[width=.9\linewidth]{images/cameraman.jpg}
\centerline{ \scriptsize (a) Original image}
\end{minipage}
\begin{minipage}[t]{0.24\linewidth}
\centering
\includegraphics[width=.9\linewidth]{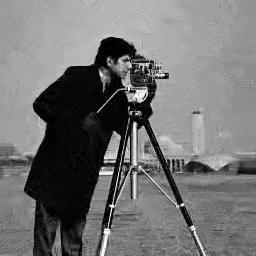}
\centerline{\scriptsize (b) Initilization}
\centerline{\scriptsize SNR: 21.39 }
\end{minipage}
\begin{minipage}[t]{0.24\linewidth}
\centering
\includegraphics[width=.9\linewidth]{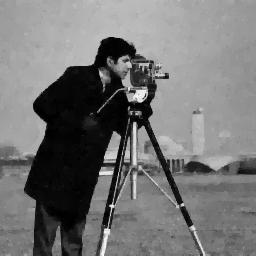}
\centerline{\scriptsize (c) TVB}
\centerline{\scriptsize SNR: 22.48 }
\end{minipage}
\begin{minipage}[t]{0.24\linewidth}
\centering
\includegraphics[width=.9\linewidth]{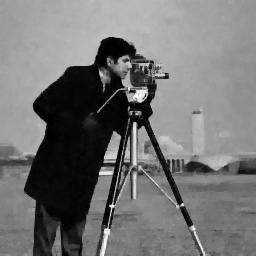}
\centerline{\scriptsize (d) Ours}
\centerline{\scriptsize SNR: \textbf{22.62}}
\end{minipage}

\begin{minipage}[t]{0.24\linewidth}
\centering
\includegraphics[width=.9\linewidth]{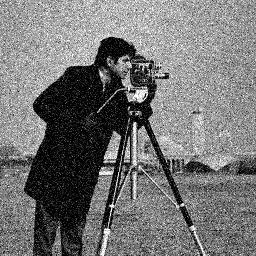}
\centerline{\scriptsize (e) ER}
\centerline{\scriptsize SNR: 12.60 }
\end{minipage}
\begin{minipage}[t]{0.24\linewidth}
\centering
\includegraphics[width=.9\linewidth]{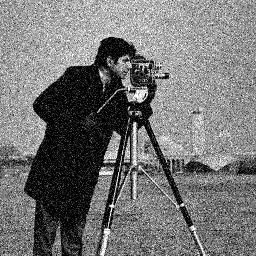}
\centerline{\scriptsize (f) RAAR}
\centerline{\scriptsize SNR: 12.45 }
\end{minipage}
\begin{minipage}[t]{0.24\linewidth}
\centering
\includegraphics[width=.9\linewidth]{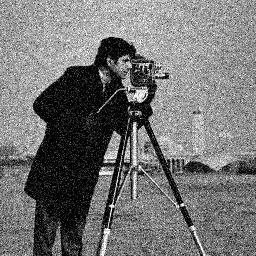}
\centerline{\scriptsize (g) WF}
\centerline{\scriptsize SNR: 12.79 }
\end{minipage}
\begin{minipage}[t]{0.24\linewidth}
\centering
\includegraphics[width=.9\linewidth]{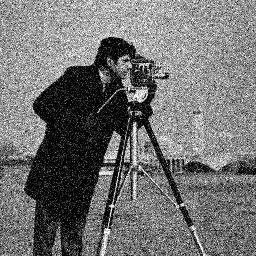}
\centerline{\scriptsize (h) TWF}
\centerline{\scriptsize SNR: 10.70 }
\end{minipage}

\begin{minipage}[t]{0.24\linewidth}
\centering
\includegraphics[width=.9\linewidth]{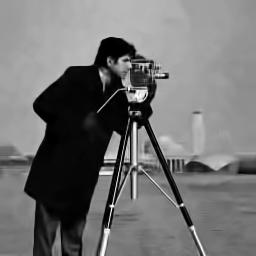}
\centerline{\scriptsize (i) ER+BM3D}
\centerline{\scriptsize SNR: 21.90}
\end{minipage}
\begin{minipage}[t]{0.24\linewidth}
\centering
\includegraphics[width=.9\linewidth]{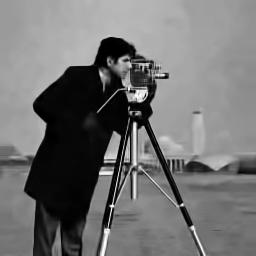}
\centerline{\scriptsize (j) RAAR+BM3D}
\centerline{\scriptsize SNR: 21.83}
\end{minipage}
\begin{minipage}[t]{0.24\linewidth}
\centering
\includegraphics[width=.9\linewidth]{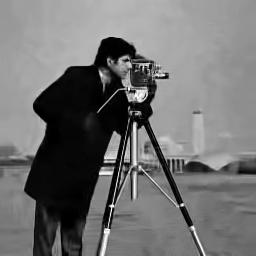}
\centerline{\scriptsize (k) WF+BM3D}
\centerline{\scriptsize SNR: 21.64 }
\end{minipage}
\begin{minipage}[t]{0.24\linewidth}
\centering
\includegraphics[width=.9\linewidth]{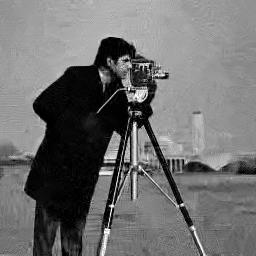}
\centerline{\scriptsize (l) TWF+BM3D}
\centerline{\scriptsize SNR: 19.38 }
\end{minipage}
\caption{Comparison of PR methods from noisy CDP measurements (\ref{CDP})  with $J=2$ and noise level $ \sigma=20$.}
\label{sig20}
\end{figure}

\begin{figure}[htbp]
\begin{minipage}[t]{0.24\linewidth}
\centering
\includegraphics[width=.9\linewidth]{images/cameraman.jpg}
\centerline{\scriptsize (a) Original image}
\end{minipage}
\begin{minipage}[t]{0.24\linewidth}
\centering
\includegraphics[width=.9\linewidth]{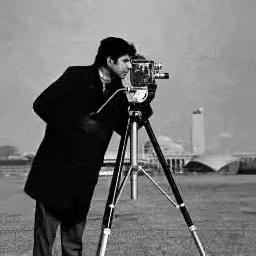}
\centerline{\scriptsize (b) Initilization}
\centerline{\scriptsize SNR: 23.56 }
\end{minipage}
\begin{minipage}[t]{0.24\linewidth}
\centering
\includegraphics[width=.9\linewidth]{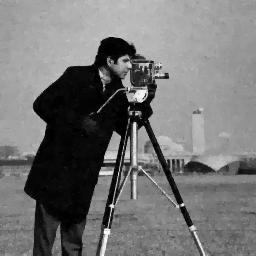}
\centerline{\scriptsize (c) TVB}
\centerline{\scriptsize SNR: 23.81 }
\end{minipage}
\begin{minipage}[t]{0.24\linewidth}
\centering
\includegraphics[width=.9\linewidth]{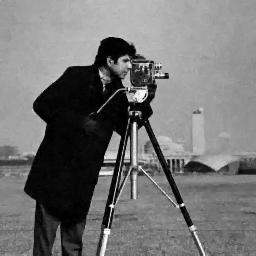}
\centerline{\scriptsize (d) Ours}
\centerline{\scriptsize SNR: \textbf{24.30}}
\end{minipage}

\begin{minipage}[t]{0.24\linewidth}
\centering
\includegraphics[width=.9\linewidth]{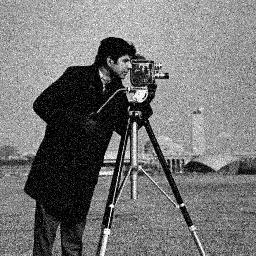}
\centerline{\scriptsize (e) ER}
\centerline{\scriptsize SNR: 15.67 }
\end{minipage}
\begin{minipage}[t]{0.24\linewidth}
\centering
\includegraphics[width=.9\linewidth]{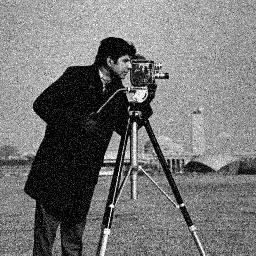}
\centerline{\scriptsize (f) RAAR}
\centerline{\scriptsize SNR: 15.59}
\end{minipage}
\begin{minipage}[t]{0.24\linewidth}
\centering
\includegraphics[width=.9\linewidth]{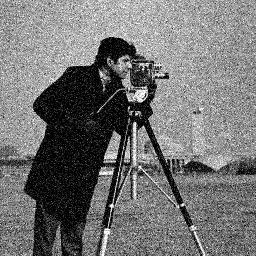}
\centerline{\scriptsize (g) WF}
\centerline{\scriptsize SNR: 13.16 }
\end{minipage}
\begin{minipage}[t]{0.24\linewidth}
\centering
\includegraphics[width=.9\linewidth]{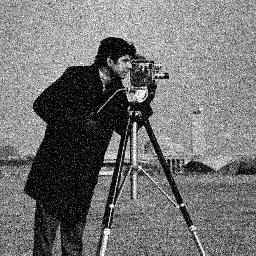}
\centerline{\scriptsize (h) TWF}
\centerline{\scriptsize SNR: 12.54 }
\end{minipage}

\begin{minipage}[t]{0.24\linewidth}
\centering
\includegraphics[width=.9\linewidth]{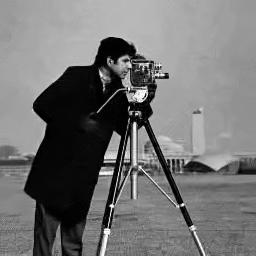}
\centerline{\scriptsize (i) ER+BM3D}
\centerline{\scriptsize SNR: 23.96}
\end{minipage}
\begin{minipage}[t]{0.24\linewidth}
\centering
\includegraphics[width=.9\linewidth]{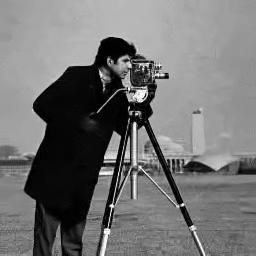}
\centerline{\scriptsize (j) RAAR+BM3D}
\centerline{\scriptsize SNR: 23.90 }
\end{minipage}
\begin{minipage}[t]{0.24\linewidth}
\centering
\includegraphics[width=.9\linewidth]{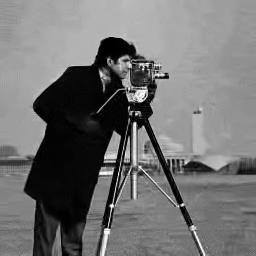}
\centerline{\scriptsize (k) WF+BM3D}
\centerline{\scriptsize SNR: 23.16}
\end{minipage}
\begin{minipage}[t]{0.24\linewidth}
\centering
\includegraphics[width=.9\linewidth]{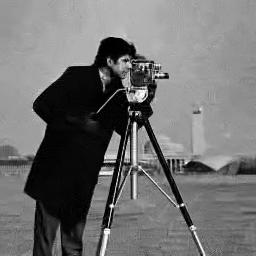}
\centerline{\scriptsize (l) TWF+BM3D}
\centerline{\scriptsize SNR: 21.40}
\end{minipage}
\caption{Comparison of PR methods from noisy CDP measurements (\ref{CDP})  with $J=3$ and noise level $ \sigma=20$.}
\label{fig:J=3}
\end{figure}

\begin{table}[]
\centering
\caption{The SNRs of reconstruction image from different PR methods for $J=2$ and $J=3$ with $\sigma=20$.}
\label{table:table-masks}
\resizebox{\linewidth}{!}{%
\begin{tabular}{|c|c|c|c|c|c|c|}
\hline
\multirow{4}{*}{$J=2$} & ER & RAAR & WF & TWF & TVB & Ours \\ \cline{2-7} 
 & 12.60 & 12.45 & 12.79 & 10.70 & 22.48 & \textbf{22.62} \\ \cline{2-7} 
 & ER+BM3D & RAAR+BM3D & WF+BM3D & TWF+BM3D & Initialization &  \\ \cline{2-7} 
 & 21.90 & 21.83 & 21.64 & 19.38 & 21.39 &  \\ \hline
\multirow{4}{*}{$J=3$} & ER & RAAR & WF & TWF & TVB & Ours \\ \cline{2-7} 
 & 15.67 & 15.59 & 13.16 & 12.54 & 23.81 & \textbf{24.30} \\ \cline{2-7} 
 & ER+BM3D & RAAR+BM3D & WF+BM3D & TWF+BM3D & Initialization &  \\ \cline{2-7} 
 & 23.96 & 23.90 & 23.16 & 21.40 & 23.56 &  \\ \hline
\end{tabular}%
}
\end{table}

We then study the effect of the number of masks. With the number of masks being $J=2$ and $J=3$, we choose $\lambda=1\times 10^4$ and $\lambda=7\times 10^3$ respectively for the proposed method. Similarly, we run five different PR methods and the denoised versions of ER, RAAR, WF, and TWF to compare their effectiveness. ER and BM3D are also used as the initial procedures. \Cref{table:table-masks} shows that $3n$ measurements have better reconstruction quality than that of $2n$ measurements for all the methods. Our method is almost 10dB higher in SNR values than ER, RAAR, WF, and TWF. Nonetheless, when the measurements increase to $3n$, our method is at least 8dB higher in SNR than theirs. For the denoised versions of ER, RAAR, WF, TWF, and the regularized method TVB, the improvement is still obvious. Visual results of $3n$ measurements are shown in \Cref{fig:J=3}. We also plot the error curves of the compared methods in \Cref{fig:error-curve}.

\begin{figure}[htbp]
\centering

\begin{minipage}[t]{0.45\linewidth}
\centering
\includegraphics[width=\linewidth]{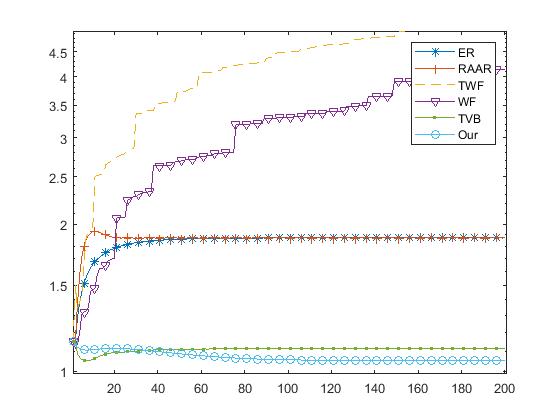}
\centerline{\scriptsize  (c) $L=2$, $\sigma=10$}
\end{minipage}
\begin{minipage}[t]{0.45\linewidth}
\centering
\includegraphics[width=\linewidth]{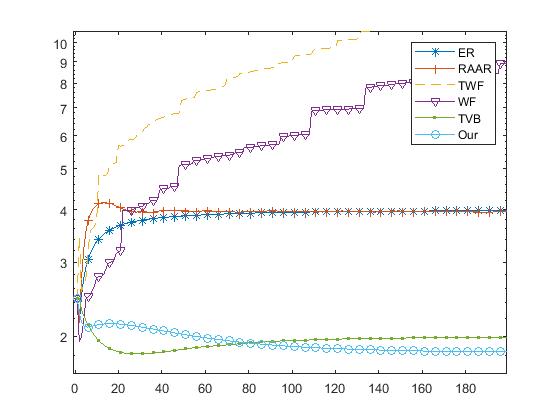}
\centerline{\scriptsize  (c) $L=2$, $\sigma=20$}
\end{minipage}
\begin{minipage}[t]{0.45\linewidth}
\centering
\includegraphics[width=\linewidth]{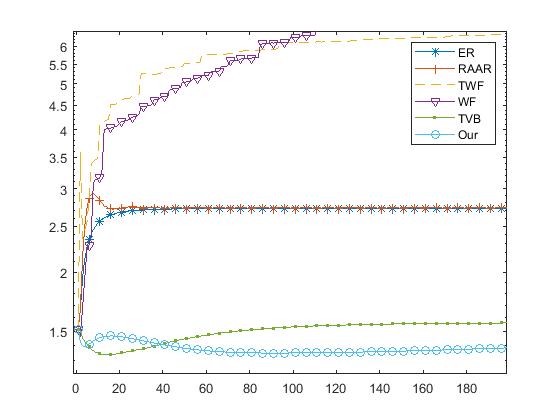}
\centerline{\scriptsize  (c) $L=3$, $\sigma=20$}
\end{minipage}
\centering
\caption{ The error curves for the proposed method with different noise levels and masks.  }
\label{fig:error-curve}
\end{figure}

\subsection{Effectiveness of TV regularization}
In this experiment, we study the effectiveness of TV regularization. We take $\lambda=0$ and $\lambda=8\times 10^3$ in the proposed method as the non-TV algorithm and the TV algorithm, respectively. Besides, CDP measurements with $J=2$ are used in this experiment, and we further add Gaussian noise with a noise level of $\sigma=20$ to the corresponding measurements. The ER algorithm and BM3D are used as the initial procedures for both the non-TV and TV algorithms. The other parameters remain the same as in the real image experiment.   

After the initial procedure, we run the non-TV and TV algorithms for $60$ iterations. The reconstructed results are shown in \Cref{nonTV}. With the TV regularization, the TV algorithm shows a huge improvement in SNR value over the non-TV algorithm. For the reconstructed images by the non-TV algorithm, prominent noise can still be observed, while the algorithm with TV successfully removes the noise. This demonstrates the importance of TV regularization to the phase retrieval problem in the presence of noise. 

\begin{figure}[htbp]
\centering

\begin{minipage}[t]{0.32\linewidth}
\centering
\includegraphics[width=1.2in]{images/livingroom.jpg}
\centerline{ \scriptsize  (a) Livingroom }
\end{minipage}
\begin{minipage}[t]{0.32\linewidth}
\centering
\includegraphics[width=1.2in]{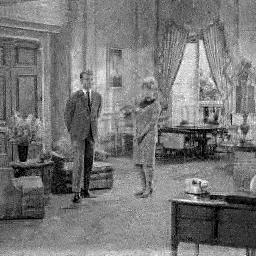}
\centerline{ \scriptsize  (b) SNR=18.33 }
\end{minipage}
\begin{minipage}[t]{0.32\linewidth}
\centering
\includegraphics[width=1.2in]{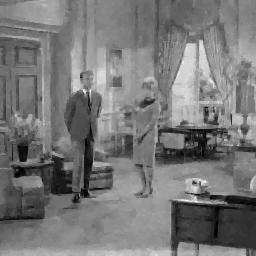}
\centerline{ \scriptsize  (c) SNR=21.22 }
\end{minipage}

\quad

\begin{minipage}[t]{0.32\linewidth}
\centering
\includegraphics[width=1.2in]{images/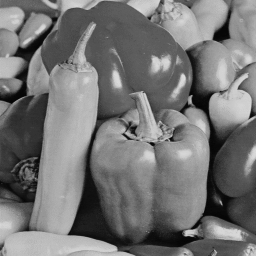}
\centerline{ \scriptsize (d) Peppers }
\end{minipage}
\begin{minipage}[t]{0.32\linewidth}
\centering
\includegraphics[width=1.2in]{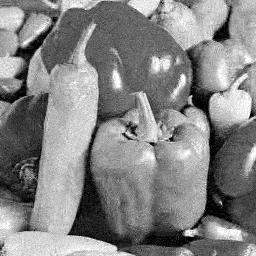}
\centerline{ \scriptsize  (e) SNR=20.89 }
\end{minipage}
\begin{minipage}[t]{0.32\linewidth}
\centering
\includegraphics[width=1.2in]{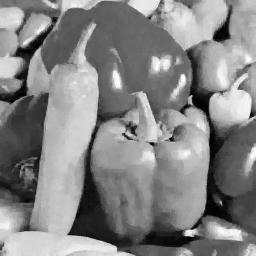}
\centerline{ \scriptsize  (f) SNR=24.28 }
\end{minipage}

\quad

\begin{minipage}[t]{0.32\linewidth}
\centering
\includegraphics[width=1.2in]{images/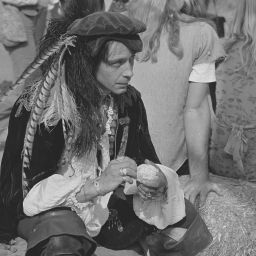}
\centerline{ \scriptsize (g) Pirate }
\end{minipage}
\begin{minipage}[t]{0.32\linewidth}
\centering
\includegraphics[width=1.2in]{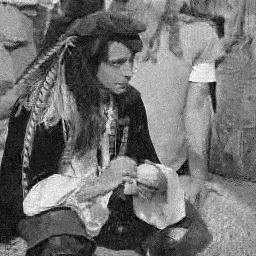}
\centerline{ \scriptsize  (h) SNR=18.73 }
\end{minipage}
\begin{minipage}[t]{0.32\linewidth}
\centering
\includegraphics[width=1.2in]{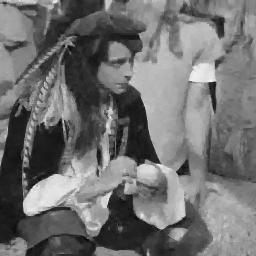}
\centerline{ \scriptsize  (i) SNR=20.64 }
\end{minipage}

\quad

\begin{minipage}[t]{0.32\linewidth}
\centering
\includegraphics[width=1.2in]{images/woman_darkhair.jpg}
\centerline{ \scriptsize (j) Woman }
\end{minipage}
\begin{minipage}[t]{0.32\linewidth}
\centering
\includegraphics[width=1.2in]{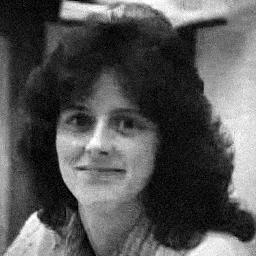}
\centerline{ \scriptsize  (k) SNR=20.26}
\end{minipage}
\begin{minipage}[t]{0.32\linewidth}
\centering
\includegraphics[width=1.2in]{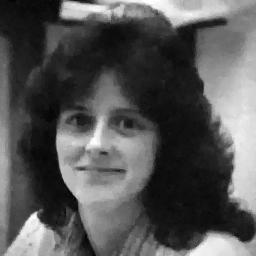}
\centerline{ \scriptsize  (l) SNR=23.86 }
\end{minipage}

\centering
\caption{ Comparison of non-TV and TV methods from CDP measurements with $J=2$ and noise level $ \sigma=20$. The first column contains four original $256 \times 256$ images. The second and third columns show the reconstructed images for the non-TV model and the TV model.   }
\label{nonTV}
\end{figure}

\section{Conclusions}
\label{sec:conclusions}
In this paper, we proposed a convex augmentation phase retrieval model based on total variation regularization. With the TV regularization, the proposed model can handle incomplete and noisy magnitude measurements. By incorporating different regularizers with the proposed convex augmentation technique, we can further improve the phase retrieval model. Furthermore, unlike convex relaxations, the proposed model is scalable and can be efficiently solved by the modified sPADMM algorithm with guaranteed convergence. The modified sPADMM is more flexible and can be applied to other problems with multiple linear constraints. The modified algorithm and the convergence analysis may be inspirational to future works. Numerically, our proposed model can recover the images with a high level of noise and preserve sharp edges at the same time. The numerical results also showcase the excellent performance of the proposed method.

For future works, we aim to extend the convex augmentation model to phase retrieval problems with multiplicative noise or Rician noise. For example. extending the convex variation model of \cite{dong2013convex} is worth studying. Moreover, smoothing methods \cite{chen2010smoothing} have been introduced to solve non-smooth and non-convex problems. We are interested in incorporating such smoothing techniques in the future with our TV-based method for a better model. A more accurate initialization to improve the numerical results will also be considered. In this paper, the simple ER method was used to compute the initial point $\hat{u}$. Due to the $\hat{z}$ term in the model, the result of the proposed model depends on the initialization. Choosing a method that takes into account the convex structure of the proposed method will be considered in the future.

\section*{Acknowledgments}
The work of Tieyong Zeng was supported by the CUHK startup, and the CUHK DAG under grant 4053342, 4053405, RGC 14300219, RGC 14302920, and grant NSFC/RGC N{\_}CUHK 415/19.

\begin{appendix}
\section{Proof of \Cref{thm:convergence}}
\label{app:proof}
We include the proof of convergence of \Cref{alg:MpADMMcam}. The proof follows \cite{fazel2013hankel}.

\textbf{Proof:} Let
\begin{equation*}
    \begin{aligned}
        F(p):= &\lambda\|p\|_1,\\
        G(z):= &\sum_{j\in \Gamma}(g_j-\sqrt{|z_j|^2+\delta})^2
    +\eta\sum_{j\in \Omega}|z_j-\hat{z}_j|^2+\sum_{j\in \Gamma}\mathbb{I}_{H_j(\hat{z}_j)}(z_j).
    \end{aligned}
\end{equation*}

In the following, we consider a complex variable $\mu\in\mathbb{C}^K$ as a real variable in $\mathbb{R}^{2K}$. The inner product $\langle \cdot,\cdot \rangle$ is also used to denote the product $\operatorname{Re}(\langle \cdot,\cdot\rangle)$. Similarly, $F$ and $G$ are considered as functions in real variables.

Since (\ref{model:relaxed discrete model}) is a convex problem with affine constraints, then $(u^*,z^*,p^*)$ is an optimal solution of (\ref{model:relaxed discrete model}) if there exists Lagrange multipliers $w^*,q^*$ such that 
\begin{equation}
\begin{aligned}
    &-A^*w^*-\nabla^Tq^*=0\\
    &-w^*\in\partial G(z^*)\\
    &-q^*\in\partial F(p^*)\\
    &z^*-Au^*=0,\ p^*-\nabla u^*=0.
\end{aligned}
\label{kkt-condition}
\end{equation}
Since the subdifferential mappings of closed convex functions are maximal monotone \cite{rockafellar2009variational}, there exists self-adjoint and positive semidefinite operators $\Sigma_F,\Sigma_G$ such that for all $p_1,p_2\in\text{dom}(F),\ x_1\in\partial F(p_1)$ and $x_2\in\partial F(p_2)$, 
\begin{equation}
    \begin{aligned}
        &F(p_1)\geq F(p_2)+\langle x_2,p_1-p_2\rangle+\frac{1}{2}\|p_1-p_2\|^2_{\Sigma_F},\\
        &\langle x_1-x_2,p_1-p_2\rangle\geq\|p_1-p_2\|^2_{\Sigma_F},
    \end{aligned}
\label{subgrad-F}
\end{equation}
and for all $z_1,z_2\in\text{dom}(G),\ y_1\in\partial G(z_1)$ and $y_2\in\partial F(z_2)$, 
\begin{equation}
    \begin{aligned}
        &G(z_1)\geq G(z_2)+\langle y_2,z_1-z_2\rangle+\frac{1}{2}\|z_1-z_2\|^2_{\Sigma_G},\\ 
        &\langle y_1-y_2,z_1-z_2\rangle\geq\|z_1-z_2\|^2_{\Sigma_G}.        
    \end{aligned}
\label{subgrad-G}
\end{equation}

The sequence $\{(u^k,z^k,p^k,w^k,q^k)\}$ generated by sPADMM satisfies
\begin{equation}
\begin{aligned}
    0&= -A^*w^k-\nabla^T q^k+\alpha A^*(Au^{k+1}-z^k)\\
    &\ +\gamma\nabla^T(\nabla u^{k+1}-p^k)+S_1(u^{k+1}-u^k)\\
    0&\in\partial G(z^{k+1})+w^k+\alpha(z^{k+1}-Au^{k+1})+S_2(z^{k+1}-z^k)\\
    0&\in\partial F(p^{k+1})+q^k+\gamma(p^{k+1}-\nabla u^{k+1})+S_3(p^{k+1}-p^k)\\
    0&= \epsilon_1(z^{k+1},u^{k+1})-(\tau\alpha)^{-1}(w^{k+1}-w^k)\\
    0&= \epsilon_2(p^{k+1},u^{k+1})-(\tau\gamma)^{-1}(q^{k+1}-q^k),
\end{aligned}
\label{spadmm-condition}
\end{equation}
where $\epsilon_1(z,u)=z-Au,\ \epsilon_2(p,u)=p-\nabla u$. We further define $u^k_e = u^k-u^*$ and similarly for $z^k_e,p^k_e,w^k_e,q^k_e$.\\
Note that
\begin{equation}
\begin{aligned}
    w^{k}&=-w^{k+1}+\tau\alpha\epsilon_1(z^{k+1},u^{k+1}),\\
    q^{k}&=-q^{k+1}+\tau\gamma\epsilon_2(p^{k+1},u^{k+1}).
\end{aligned}
\label{dual-kk1}
\end{equation}

Combining (\ref{subgrad-F}-\ref{dual-kk1}) with (\ref{kkt-condition}), we have
\begin{equation}
\begin{aligned}
    &0=\langle Au^{k+1}_e,w^{k+1}_e-\tau\alpha\epsilon_1(z^{k+1},u^{k+1})\rangle\\&+\langle\nabla u^{k+1}_e,q^{k+1}_e-\tau\gamma\epsilon_2(q^{k+1},u^{k+1})\rangle-\langle u^{k+1}_e,S_1(u^{k+1}-u^k)\rangle,\\
    &\|z^{k+1}_e\|^2_{\Sigma_F}\\&\leq\langle z^{k+1}_e,-w^{k+1}_e-(1-\tau)\alpha\epsilon_1(z^{k+1}_e,w^{k+1}_e)-S_2(z^{k+1}-z^k)\rangle,\\
    &\|p^{k+1}_e\|^2_{\Sigma_G}\\&\leq\langle p^{k+1}_e,-q^{k+1}_e-(1-\tau)\gamma\epsilon_2(p^{k+1}_e,q^{k+1}_e)-S_3(p^{k+1}-p^k)\rangle.
\end{aligned}
\label{main-inequ1}
\end{equation}
Adding the above inequalities, we have
\begin{equation}
    \begin{aligned}
        &\|z^{k+1}_e\|_{\Sigma_F}+\|p^{k+1}_e\|_{\Sigma_G}\leq\\&\langle w^{k+1}_e,Au^{k+1}_e-z^{k+1}_e\rangle+\langle q^{k+1}_e,\nabla u^{k+1}_e-p^{k+1}_e\rangle\\
        &+(1-\tau)\alpha\langle\epsilon_1(z^{k+1},u^{k+1}),Au^{k+1}_e-z^{k+1}_e\rangle\\&+(1-\tau)\gamma\langle\epsilon_2(p^{k+1},u^{k+1}),\nabla u^{k+1}_e-p^{k+1}_e\rangle\\
        &-\alpha\langle z^{k+1}-z^k,Au^{k+1}_e-z^{k+1}_e\rangle-\alpha\langle z^{k+1}-z^k,z^{k+1}_e\rangle\\
        &-\gamma\langle p^{k+1}-p^k,\nabla u^{k+1}_e-p^{k+1}_e\rangle\\&-\gamma\langle p^{k+1}-p^k, \nabla u^{k+1}_e-p^{k+1}_e\rangle\\
        &-\langle u^{k+1}_e,S_1(u^{k+1}-u^k\rangle-\langle z^{k+1}_e,S_2(z^{k+1}-z^k\rangle\\&-\langle p^{k+1}_e,S_3(p^{k+1}-p^k\rangle\\
        &=(\tau\alpha)^{-1}\langle w^{k+1}_e,w^k_e-w^{k+1}_e\rangle+(\tau\gamma)^{-1}\langle q^{k+1}_e,q^k_e-q^{k+1}_e\rangle\\
        &-(1-\tau)\alpha\|\epsilon_1(z^{k+1},u^{k+1}\|^2-(1-\tau)\gamma\|\epsilon_2(p^{k+1},u^{k+1})\|^2\\
        &+\alpha\langle z^{k+1}-Au^{k+1},z^{k+1}-z^k\rangle-\alpha\langle z^{k+1}_e,z^{k+1}-z^k\rangle\\
        &+\gamma\langle p^{k+1}-\nabla u^{k+1},p^{k+1}-p^k\rangle-\gamma\langle p^{k+1}_e,p^{k+1}-p^k\rangle\\
        &-\langle u^{k+1}_e,S_1(u^{k+1}-u^k\rangle-\langle z^{k+1}_e,S_2(z^{k+1}-z^k\rangle\\&-\langle p^{k+1}_e,S_3(p^{k+1}-p^k\rangle.
    \end{aligned}
\label{main-inequ2}
\end{equation}
Now we estimate the terms $\alpha\langle z^{k+1}-Au^{k+1},z^{k+1}-z^k\rangle$ and $\gamma\langle p^{k+1}-\nabla u^{k+1},p^{k+1}-p^k\rangle$.
We have
\begin{equation}
    \begin{aligned}
        &\alpha\langle z^{k+1}-Au^{k+1},z^{k+1}-z^k\rangle\\
        &=(1-\tau)\alpha\langle\epsilon_1(z^{k+1},u^{k+1}),z^{k+1}-z^k\rangle\\&\ +\tau\alpha\langle\epsilon_1(z^{k+1},u^{k+1},z^{k+1}-z^k\rangle\\
        &=(1-\tau)\alpha\langle\epsilon_1(z^{k+1},u^{k+1}),z^{k+1}-z^k\rangle\\&\ +\tau\alpha\langle w^{k+1}-w^k,z^{k+1}-z^k\rangle,\\
        &\gamma\langle p^{k+1}-\nabla u^{k+1},p^{k+1}-p^k\rangle\\
        &=(1-\tau)\gamma\langle\epsilon_2(p^{k+1},u^{k+1}),p^{k+1}-p^k\rangle\\&\ +\tau\gamma\langle\epsilon_2(p^{k+1},u^{k+1},p^{k+1}-p^k\rangle\\
        &=(1-\tau)\gamma\langle\epsilon_2(p^{k+1},u^{k+1}),p^{k+1}-p^k\rangle\\&\ +\tau\gamma\langle q^{k+1}-q^k,p^{k+1}-p^k\rangle.
    \end{aligned}
\label{est1}
\end{equation}
By the sPADMM condition (\ref{spadmm-condition}),
\begin{equation}
    \begin{aligned}
        &-w^{k+1}-(1-\tau)\alpha\epsilon_1(z^{k+1},u^{k+1})-S_2(z^{k+1}-z^k) \\ 
        & \ \in\partial F(z^{k+1}),\\
        &-w^k-(1-\tau)\alpha\epsilon_1(z^k,u^k)-S_2(z^k-z^{k-1})\in\partial F(z^k),\\
        &-q^{k+1}-(1-\tau)\gamma\epsilon_2(p^{k+1},u^{k+1})-S_3(p^{k+1}-p^k)\\ 
        & \ \in\partial G(p^{k+1}),\\
        &-q^k-(1-\tau)\gamma\epsilon_2(p^k,u^k)-S_3(p^k-p^{k-1})\in\partial G(p^k).
    \end{aligned}
\label{est2}
\end{equation}
By the monotonicity of $\partial F,\partial G$ and (\ref{est2}), we have
\begin{equation}
    \begin{aligned}
        -&\langle w^{k+1}-w^k-(1-\tau)\alpha[\epsilon_1(z^{k+1},u^{k+1})-\epsilon_1(z^k,u^k)],\\
        & z^{k+1}-z^k\rangle
        \geq\|z^{k+1}-z^k\|^2_{S_2}-\langle S_2(z^{k+1}-z^k),z^{k+1}-z^k\rangle,\\
        -&\langle q^{k+1}-q^k-(1-\tau)\gamma[\epsilon_2(p^{k+1},u^{k+1})-\epsilon_2(p^k,u^k)],\\
        & p^{k+1}-p^k\rangle
        \geq\|p^{k+1}-p^k\|^2_{S_3}-\langle S_3(p^{k+1}-p^k),p^{k+1}-p^k\rangle.
    \end{aligned}
\label{est3}
\end{equation}
By (\ref{est1}) and (\ref{est3}), then 
\begin{equation}
    \begin{aligned}
        &\alpha\langle\epsilon_1(z^{k+1},u^{k+1}),z^{k+1}-z^k\rangle\\
        &=(1-\tau)\alpha\langle\epsilon_1(z^{k+1},u^{k+1},z^{k+1}-z^k\rangle\\
        &\ +\langle w^{k+1}-w^k,z^{k+1}-z^k\rangle\\
        &\leq(1-\tau)\alpha\langle\epsilon_1(z^k,u^k),z^{k+1}-z^k\rangle-\|z^{k+1}-z^k\|^2_{S_2}\\
        &\ +\langle S_2(z^k-z^{k-1}),z^{k+1}-z^k\rangle\\
        &\leq(1-\tau)\alpha\langle\epsilon_1(z^k,u^k),z^{k+1}-z^k\rangle-\frac{1}{2}\|z^{k+1}-z^k\|^2_{S_2}\\
        & +\frac{1}{2}\|z^k-z^{k-1}\|^2_{S_2},    
    \end{aligned}
\label{estz}
\end{equation}
and 
\begin{equation}
    \begin{aligned}    
        &\gamma\langle\epsilon_2(p^{k+1},u^{k+1}),p^{k+1}-p^k\rangle\\
        &\leq(1-\tau)\gamma\langle\epsilon_2(p^k,u^k),p^{k+1}-p^k\rangle-\frac{1}{2}\|p^{k+1}-p^k\|^2_{S_3}\\
        &+\frac{1}{2}\|p^k-p^{k-1}\|^2_{S_3}.
    \end{aligned}    
\label{estp}
\end{equation}
Define $a_{k+1}=(1-\tau)\alpha\langle\epsilon_1(z^{k+1},u^{k+1},z^{k+1}-z^k\rangle$ and $b_{k+1}=(1-\tau)\gamma\langle\epsilon_2(p^k,u^k),p^{k+1}-p^k\rangle$.
We get from (\ref{main-inequ2}), (\ref{estz}) and (\ref{estp}) that
\begin{equation}
    \begin{aligned}
        &2\|z^{k+1}_e\|^2_{\Sigma_F}+2\|p^{k+1}_e\|^2_{\Sigma_G}\\
        &\leq(\tau\alpha)^{-1}(\|w^k_e\|^2-\|w^{k+1}_e\|^2)+(\tau\gamma)^{-1}(\|p^k_e\|^2-\|p^{k+1}_e\|^2)\\
        &\ -(2-\tau)\alpha\|\epsilon_1(z^{k+1},u^{k+1})\|^2-(2-\tau)\gamma\|\epsilon_2(p^{k+1},u^{k+1})\|^2\\
        &\ +2a_{k+1}-\|z^{k+1}-z^k\|^2_{S_2}+\|z^k-z^{k-1}\|^2_{S_2}\\
        &\ +2b_{k+1}-\|p^{k+1}-p^k\|^2_{S_3}+\|p^k-p^{k-1}\|^2_{S_3}\\
        &\ -\alpha\|z^{k+1}-z^k\|^2-\alpha\|z^{k+1}_e\|^2+\alpha\|z^k_e\|^2\\
        &\ -\gamma\|p^{k+1}-p^k\|^2-\gamma\|p^{k+1}_e\|^2+\gamma\|p^k_e\|^2\\
        &\ -\|u^{k+1}-u^k\|^2_{S_1}-\|u^{k+1}_e\|^2_{S_1}+\|u^k_e\|^2_{S_1}\\
        &\ -\|z^{k+1}-z^k\|^2_{S_2}-\|z^{k+1}_e\|^2_{S_2}+\|z^k_e\|^2_{S_2}\\
        &\ -\|p^{k+1}-p^k\|^2_{S_3}-\|p^{k+1}_e\|^2_{S_3}+\|p^k_e\|^2_{S_3}.
    \end{aligned}
\label{inequ}
\end{equation}
For convenience, we define
\begin{equation}
    \left\{
    \begin{aligned}
        \delta_{k+1}&=\min\{\tau,1+\tau-\tau^2\}(\alpha\|z^{k+1}-z^k\|^2\\
        &+\gamma\|p^{k+1}-p^k\|^2) +\|z^{k+1}-z^k\|^2_{S_2}+\|p^{k+1}-p^k\|^2_{S_3}\\
        t_{k+1}& =\delta_{k+1}+\|u^{k+1}-u^k\|^2_{S_1}+2\|z^{k+1}_e\|^2_{\Sigma_F}+2\|p^{k+1}_e\|^2_{\Sigma_G},\\
        \psi_{k+1}& =\theta(u^{k+1},z^{k+1},p^{k+1},w^{k+1},q^{k+1})+\|z^{k+1}-z^k\|^2_{S_2}\\
        &\ +\|p^{k+1}-p^k\|^2_{S_3},
    \end{aligned}
    \right.
\label{defs}
\end{equation}
where $\theta(u,z,p,w,q)=(\tau\alpha)^{-1}\|w-w^*\|^2+(\tau\gamma)^{-1}\|w-w^*\|^2+(\tau\gamma)^{-1}\|q-q^*\|^2+\|u-u^*\|^2_{S_1}+\|z-z^*\|^2_{S_2}+\|p-p^*\|^2_{S_3}+\alpha\|z-z^*\|^2+\gamma\|p-p^*\|^2.$ We consider two cases $\tau\in(0,1]$ and $\tau>1$ respectively.\\
Case 1: $\tau\in(0,1]$. Note that
\begin{equation}
    \begin{aligned}
        2\langle\epsilon_1(z^k,u^k),z^{k+1}-z^k\rangle\leq\|z^{k+1}-z^k\|^2+\|\epsilon_1(z^k,u^k)\|^2,\\
        2\langle\epsilon_2(p^k,u^k),p^{k+1}-p^k\rangle\leq\|p^{k+1}-p^k\|^2+\|\epsilon_2(p^k,u^k)\|^2.
    \end{aligned}
\label{aux-est1}
\end{equation}
By the definition of $a_{k+1},b_{k+1}$ and (\ref{inequ}), we have
\begin{equation}
    \begin{aligned}
        &t_{k+1}+\alpha\|\epsilon_1(z^{k+1},u^{k+1})\|^2+\gamma\|\epsilon_2(p^{k+1},u^{k+1})\|^2\\
        &\leq[\psi_k+(1-\tau)\alpha\|\epsilon_1(z^k,u^k)\|^2+(1-\tau)\gamma\|\epsilon_2(p^k,u^k)\|^2]\\
        &-[\psi_{k+1}+(1-\tau)\alpha\|\epsilon_1(z^{k+1},u^{k+1})\|^2\\
        &+(1-\tau)\gamma\|\epsilon_2(p^{k+1},u^{k+1})\|^2].
    \end{aligned}
\label{bound1}
\end{equation}
Case 2: $\tau>1$. Similarly, we have
\begin{equation}
    \begin{aligned}
        -2\langle\epsilon_1(z^k,u^k),z^{k+1}-z^k\rangle\leq\tau\|z^{k+1}-z^k\|^2+\tau^{-1}\|\epsilon_1(z^k,u^k)\|^2,\\
        -2\langle\epsilon_2(p^k,u^k),p^{k+1}-p^k\rangle\leq\tau\|p^{k+1}-p^k\|^2+\tau^{-1}\|\epsilon_2(p^k,u^k)\|^2.
    \end{aligned}
\label{aux-est2}
\end{equation}
Then
\begin{equation}
    \begin{aligned}
        &t_{k+1}+\tau^{-1}(1+\tau-\tau^2)[\alpha\|\epsilon_1(z^{k+1},u^{k+1})\|^2\\
        &+\gamma\|\epsilon_2(p^{k+1},u^{k+1})\|^2]\\
        &\leq[\psi_k+(1-\tau^{-1})\alpha\|\epsilon_1(z^k,u^k)\|^2+(1-\tau^{-1})\gamma\|\epsilon_2(p^k,u^k)\|^2]\\
        &-[\psi_{k+1}+(1-\tau^{-1})\alpha\|\epsilon_1(z^{k+1},u^{k+1})\|^2\\
        &+(1-\tau^{-1})\gamma\|\epsilon_2(p^{k+1},u^{k+1})\|^2].
    \end{aligned}
\label{bound2}
\end{equation}
From (\ref{defs}-\ref{bound2}), we see that $\psi_{k+1}$ is bounded and
\begin{equation}
    \begin{aligned}
        &\lim_{k\rightarrow\infty}t_{k+1}=0,\\
        &\lim_{k\rightarrow\infty}\|w^{k+1}-w^k\|=\lim_{k\rightarrow\infty}(\tau\alpha)^{-1}\|\epsilon_1(z^{k+1},u^{k+1})\|=0,\\
        &\lim_{k\rightarrow\infty}\|q^{k+1}-q^k\|=\lim_{k\rightarrow\infty}(\tau\gamma)^{-1}\|\epsilon_2(p^{k+1},u^{k+1})\|=0.
    \end{aligned}
\label{lim1}
\end{equation}
By the definition of $\theta_{k+1}$ and $t_{k+1}$, the sequences $\{\|w^{k+1}\|\}$, $\{\|q^{k+1}\|\}$, $\{\|u^{k+1}_e\|_{S_1}\}$, $\{\|z^{k+1}_e\|^2_{\Sigma_F+S_2+\alpha I}\}$, $\{\|p^{k+1}_e\|^2_{\Sigma_G+S_3+\gamma I}\}$ are all bounded.
Since $\Sigma_F+S_2+\alpha I\succ0$, $\Sigma_G+S_3|\gamma I\succ0$,
$\|z^{k+1}_e\|$, $\|p^{k+1}_e\|$ are all bounded. Since $S_1$ is selected to be positive definite, $\|u^{k+1}_e\|$ is also bounded.\\
Hence, the sequence $\{(u^k,z^k,p^k,w^k,q^k)\}$ is bounded. Therefore, there is a subsequence converging to a cluster point.\\
Suppose $$\{(u^{k_i},z^{k_i},p^{k_i},w^{k_i},q^{k_i})\}\rightarrow(u^\infty,z^\infty,p^\infty,w^\infty,q^\infty).$$
From (\ref{lim1}), we have
\begin{equation}
    \begin{aligned}
        &\lim_{k\rightarrow\infty}\|z^{k+1}-z^k\|=0,\ \lim_{k\rightarrow\infty}\|p^{k+1}-p^k\|=0,\\
        &\lim_{k\rightarrow\infty}\|w^{k+1}-w^k\|=0,\ \lim_{k\rightarrow\infty}\|q^{k+1}-q^k\|=0,\\
        &\lim_{k\rightarrow\infty}\|u^{k+1}-u^k\|^2_{S_1}=0,\ \lim_{k\rightarrow\infty}\|z^{k+1}-z^k\|^2_{S_2}=0,\ \\
        & \lim_{k\rightarrow\infty}\|p^{k+1}-p^k\|^2_{S_3}=0. 
    \end{aligned}
\label{lim2}
\end{equation}
Note that since
\begin{equation}
    \begin{aligned}
        \|z^k-Au^{k+1}\|\leq\|z^{k+1}-Au^{k+1}\|+\|z^{k+1}-z^k\|,\\
        \|p^k-\nabla u^{k+1}\|\leq\|p^{k+1}-\nabla u^{k+1}\|+\|p^{k+1}-p^k\|.
    \end{aligned}
\end{equation}
We get from (\ref{lim1}) and (\ref{lim2}) that
\begin{equation}
    \lim_{k\rightarrow\infty}\|z^k-Au^{k+1}\|=0,\ \lim_{k\rightarrow\infty}\|p^k-\nabla u^{k+1}\|=0.
\label{lim3}
\end{equation}
Taking limit on both sides of (\ref{spadmm-condition}) and by the closedness of $\partial F$, $\partial G$ \cite{borwein2010convex}, we have 
\begin{equation}
\begin{aligned}
    &-A^*w^\infty-\nabla^Tq^\infty=0,\\
    &-w^\infty\in\partial G(z^\infty),\\
    &-q^\infty\in\partial F(p^\infty),\\
    &z^\infty-Au^\infty=0,\ p^\infty-\nabla u^\infty=0.
\end{aligned}
\label{lim-kkt}
\end{equation}
Therefore, $(u^\infty,z^\infty,p^\infty)$ is an optimal solution to (\ref{model:relaxed discrete model}), and $(w^\infty,q^\infty)$ are the corresponding Lagrange multipliers.
We now show the convergence of the whole sequence. Since $(u^\infty,z^\infty,p^\infty,w^\infty,q^\infty)$ satisfies (\ref{kkt-condition}), we replace $(u^*,z^*,p^*,w^*,q^*)$ in the above by $(u^\infty,z^\infty,p^\infty,w^\infty,q^\infty)$.
Therefore, for $\tau\in(0,1]$, $\{\psi_{k_i}+(1-\tau)[\alpha\|\epsilon_1(z^{k_i},u^{k_i})\|^2+\gamma\|\epsilon_2(p^{k_i},u^{k_i})\|^2]\}\rightarrow0$ and for $\tau\in(1,(1+\sqrt{5})/2)$, $\{\psi_{k_i}+(1-\tau^{-1})[\alpha\|\epsilon_1(z^{k_i},u^{k_i})\|^2+\gamma\|\epsilon_2(p^{k_i},u^{k_i})\|^2]\}\rightarrow0$. Since these 2 subsequences are from a non-increasing sequence, we have both $\{\|\epsilon_1(z^k,u^k)\|\}$, $\{\|\epsilon_2(p^k,u^k)\|\}$ converge to $0$. Consequently, we have $\lim_{k\rightarrow\infty}\psi_k=0$. Therefore,
$\lim_{k\rightarrow\infty}w^k=w^\infty$, and $\lim_{k\rightarrow\infty}q^k=q^\infty$.
Hence, combining with (\ref{lim1}), we have 
\begin{equation}
    \begin{aligned}
        \lim_{k\rightarrow\infty}\|u^k_e\|^2_{S_1}=0,\\
        \lim_{k\rightarrow\infty}\|z^k_e\|^2_{\Sigma_F+S_2+\alpha I}=0,\\
        \lim_{k\rightarrow\infty}\|p^k_e\|^2_{\Sigma_G+S_3+\gamma I}=0.
    \end{aligned}
\label{final-lim}
\end{equation}
Since $S_1$, $\Sigma_F+S_2+\alpha I$ and $\Sigma_G+S_3+\gamma I$ are positive definite, we have $\lim_{k\rightarrow\infty}u^k=u^\infty$, $\lim_{k\rightarrow\infty}z^k=z^\infty$ and $\lim_{k\rightarrow\infty}p^k=p^\infty$.
\end{appendix}


 
\section*{References Section}
\bibliographystyle{IEEEtran}
\bibliography{mybibfile}

\end{document}